\newcommand{\Acal}{\mathcal{A}}
\newcommand{\Bcal}{\mathcal{B}}
\newcommand{\Gcal}{\mathcal{G}}
\newcommand{\Ncal}{\mathcal{N}}
\newcommand{\Pcal}{\mathcal{P}}
\newcommand{\Qcal}{\mathcal{Q}}
\newcommand{\Rcal}{\mathcal{R}}
\newcommand{\Scal}{\mathcal{S}}
\newcommand{\Xcal}{\mathcal{X}}
\newcommand{\Ycal}{\mathcal{Y}}
\newcommand{\Zcal}{\mathcal{Z}}
\newcommand{\ch}{\mathbf{1}}
\newcommand{\Z}{\mathbb{Z}}
\newcommand{\R}{\mathbb{R}}
\newcommand{\N}{\mathbb{N}}
\newcommand{\E}{\mathbb{E}}
\newcommand{\Ab}{\mathbf{A}}
\newcommand{\Bb}{\mathbf{B}}
\newcommand{\Lb}{\mathbf{L}}
\newcommand{\Mb}{\mathbf{M}}
\newcommand{\Pb}{\mathbf{P}}
\newcommand{\Qb}{\mathbf{Q}}
\newcommand{\Tb}{\mathbf{T}}
\newcommand{\Wb}{\mathbf{W}}
\newcommand{\Xb}{\mathbf{X}}
\newcommand{\Yb}{\mathbf{Y}}
\newcommand{\Zb}{\mathbf{Z}}
\newcommand{\al}{\alpha}
\newcommand{\del}{\delta}
\newcommand{\ep}{\epsilon}
\newcommand{\sig}{\sigma}
\newcommand{\la}{\lambda}
\newcommand{\ol}{\overline}
\newcommand{\br}{\vspace{3 mm}}
\newcommand{\tri}{\bigtriangleup}
\newcommand{\rest}{\upharpoonright}
\newcommand{\Aut}{{\rm{Aut\,}}}
\newcommand{\Id}{{\rm{Id}}}
\theoremstyle{plain}
\newtheorem{thm}{Theorem}[section]
\newtheorem{cor}[thm]{Corollary}
\newtheorem{lem}[thm]{Lemma}
\newtheorem{prop}[thm]{Proposition}
\newtheorem{claim}{Claim}
\theoremstyle{definition}
\newtheorem{defn}[thm]{Definition}
\newtheorem{defns}[thm]{Definitions}
\newtheorem{rmk}[thm]{Remark}
\newtheorem{rem}[thm]{Remark}
\newtheorem{prob}[thm]{Problem}
\begin{document}


\title[Generic classes of ergodic transformations]
{On some generic classes of ergodic measure preserving transformations}

\author[Eli Glasner]{Eli Glasner}
\address{Department of Mathematics,
Tel-Aviv University, Ramat Aviv, Israel}
\email{glasner@math.tau.ac.il}

\author[Jean-Paul Thouvenot]
{Jean-Paul Thouvenot}
\address{Laboratoire de Probabilit\'es, Universit\'e 
Statistique et Mod\'{e}lisation, Sorbonne Universit\'{e},
4 Place Jussieu, 75252 Paris Cedex 05, France}
\email{jean-paul.thouvenot@upmc.fr}

\author[Benjamin Weiss]{Benjamin Weiss}
\address{Mathematics Institute, Hebrew University of Jerusalem,
Jerusalem, Israel}
\email{weiss@math.huji.ac.il}


\setcounter{secnumdepth}{2}



\setcounter{section}{0}


 
%

\begin{abstract}
We answer positively a question of Ryzhikov, namely we show that 
being a relatively weakly mixing extension is a comeager property in the Polish group
of measure preserving transformations.
We study some related classes of ergodic transformations and their interrelations.
In the second part of the paper we show that for a fixed ergodic $T$ with property $\Ab$, 
a generic extension $\hat{T}$ of $T$ also has the property $\Ab$.
Here $\Ab$ stands for each of the following properties:
(i) having the same entropy as $T$, (ii) Bernoulli, (iii)  K, and (iv) loosely Bernoulli.
\end{abstract}

\keywords{relative weak mixing, comeager properties, prime dynamical systems,
Bernoulli systems, K-systems, loosely Bernoulli systems}

\thanks{{References: 46 entries. UDK: 517.987.
\em 2010 MSC2010:
37A25, 37A05, 37A15, 37A20}}

\begin{date}
{August 20, 2021}
\end{date}

\maketitle


\tableofcontents
\setcounter{secnumdepth}{2}


\setcounter{section}{0}


\section*{Introduction}

Motivated by a question of Valery Ryzhikov \cite{R-06}, regarding the nature of the class of ergodic transformations
$\Xb = (X, \Xcal, \mu, T)$ which admit a proper factor 
$\Xb \to \Yb$ with $\Yb = (Y, \Ycal, \nu, T)$ nontrivial and where the extension is 
relatively weakly mixing (we call this class RWM), we consider in this note the
RWM property and some related classes as follows:

\br

We view the collection MPT (measure preserving transformations) 
of invertible measure preserving transformations
on a nonatomic standard probability space $(X, \Xcal,\mu)$, as the group $\Aut(\mu)$
of automorphisms of $(X, \Xcal,\mu)$.
The topology on $\Aut(X,\mu)$ is induced by a complete metric
$$
D(S,T) = \sum_{n \in \N} 2^{-n}  (\mu(SA_n \tri TA_n) + \mu(S^{-1}A_n \tri T^{-1}A_n)), 
$$
with $\{A_n\}_{n \in \N}$ a dense sequence in the measure algebra
$(\mathcal{X},d_\mu)$, where $d_\mu(A,B) = \mu(A \tri B)$.
Equipped with this topology $\Aut(X, \mu)$ is a Polish topological group.
The set ${\rm MPT}_e \subset \Aut(\mu)$,
comprising the ergodic transformations, is a dense $G_\del$ subset of $\Aut(\mu)$.
Thus $MPT = \Aut(\mu)$ and we use the latter when we want to emphasise the group
structure of this space,

\begin{defns}\label{def}
Given an ergodic system $\Xb$ we say that a factor map $\pi : \Xb \to \Yb$ is
{\em nontrivial}
when $\Yb$ is not the trivial one point system and $\pi$ is not an isomorphism.
(We often refer to a factor map $\pi : \Xb \to \Yb$ from $\Xb$ to $\Yb$
also as an extension of $\Yb$.)
An ergodic dynamical system $\Xb = (X, \Xcal, \mu, T)$ is :
\begin{enumerate}
\item 
RD ({\em relatively distal}) if there exists a nontrivial factor map $\pi : \Xb \to \Yb$ 
such that the extension is relatively distal.
Every ergodic distal system which is not isomorphic to a cyclic permutation on $\Z_p$
with $p$ a prime number, is RD. 
\item
RWM ({\em relatively weakly mixing}), if there exists a nontrivial factor map $\pi : \Xb \to \Yb$ 
such that the extension is relatively weakly mixing; i.e. such that the relative product 
$\Xb \underset{\Yb}{\times} \Xb$ is ergodic. 
By the Furstenberg-Zimmer theorem, every ergodic system which is neither distal nor weakly mixing is RWM.
\item
TRD ({\em totally relatively distal}), 
if every nontrivial factor map $\pi : \Xb \to \Yb$ is relatively distal. Every distal system is TRD.
Every prime system is (vacuouesly) TRD.
\item
TRD$^\sim$
if it is TRD, but not prime. 
Every distal system which is not isomorphic to $\Z_p$ for some prime number $p$,  is TRD$^\sim$.
By Veech's theorem every nonprime weakly mixing simple system is in TRD$^\sim$
(see \cite[Theorem 12.3]{G}).
The example in \cite{dJ} is a weakly mixing system in TRD$^\sim$ with no prime factor.
\item
TRWM ({\em totally relatively weakly mixing}),
if every factor map $\pi : \Xb \to \Yb$ which is nontrivial 
is relatively weakly mixing. 
Every prime system is (vacuously) TRWM,
and probably also products of disjoint prime systems are TRWM,
but it is not clear to us what else is there in this class.
 An ergodic system $\Xb$ in TRWM is either weakly mixing, or it 
 admits the finite system $\Z_p = \Z /p\Z$ as a factor, for some prime number $p \ge 2$,
 such that the extension $\Xb \to \Z_p$ is relatively weakly mixing and 
 $\Xb$ is not of the form $\Xb = \Z_p \times \Wb$ for a weakly mixing system $\Wb$.
(An example of a system which has the latter form --- but is not TRWM --- is provided in \cite{O}.)
In fact, by the Furstenberg-Zimmer structure theorem an ergodic system $\Xb \in$ TRWM has the structure
$\Xb \to \Yb$, where $\Yb$ is the maximal distal factor of $\Xb$ and the extension is
relatively weakly mixing. If $\Yb$ is the trivial system then $\Xb$ is weakly mixing.
Otherwise, as $\Xb$ is TRWM, we must have $\Yb = \Z_p$ for some prime $p \ge 2$.
Finally  $\Xb$ can not have the form $\Xb = \Z_p \times \Wb$ for a weakly mixing system $\Wb$,
since if it were the projection map $\Xb \to \Wb$ would not be a relatively weakly mixing extension.
\item
An ergodic system $\Xb$ is called  {\em  $2$-fold quasi-simple}
 ({\em $2$-fold distally simple}) if every ergodic non-product $2$-fold self-joining of it is compact 
 (relatively distal respectively) over the marginals.
 Every simple system (hence every weakly mixing MSJ system) is $2$-fold quasi-simple.
  \end{enumerate}
 \end{defns}

\br

For more details and results concerning these notions see (among other publications)
the following list:
\begin{enumerate}
\item[(i)]
In \cite{Ru} Rudolph introduced the notion of MSJ (minimal self joinings), 
and in  \cite{V} Veech introduced $2$-simple systems.
Simple systems of higher order and their joinings were studied in \cite{dJ-R}.
In \cite{K-92} King has shown that simplicity of order 4 implies simplicity of all orders
and in \cite{GHR} this was improved to showing that order 3 suffices.
\item[(ii)] 
Various generalizations of simplicity were introduced and previous results
were sharpened in  \cite{JLM} and \cite{R}. 
  \item[(iii)]
The first genericity type theorem was King's paper \cite{K-00} 
where he has shown that having roots of all orders is generic.
This was followed by Ageev  who first proved 
that, for every finite abelian group $G$, having $G$ in the centraliser is generic \cite{A-00};
thereby showing that the class of prime transformations is meager,
and then in \cite{A-03}, that the generic system is neither simple nor semisimple.
\item[(iv)]
Stronger and stronger results of this nature are to be found in
\cite{dR-SL}, \cite{S-E}, \cite{R-T}, \cite{D}, \cite{So}, and \cite{C-K}.
\end{enumerate}

For information concerning the Furstenberg-Zimmer theorem 
and structure theory in ergodic theory we refer to \cite{G}.

In Section \ref{pre} we present a preliminary study of the nature of these classes and their interrelations.
In Section \ref{RWMg} we answer positively Ryzhikov's question \cite{R-06}, namely we show that 
the property RWM is comeager.

\br

In the second part of the paper, we change slightly the framework of our discussion.
We mainly fix an ergodic transformation $T$ with a certain property,
and then consider either its family of factors, 
or its family of ergodic extensions.

In Section \ref{thouvenot} we show that 
the relatively weakly mixing factors of any fixed positive entropy $T$ form a dense $G_\del$ set.
In Section \ref{add-0} we show that a generic extension of any ergodic $T$ does not add entropy.
In Sections \ref{bernoulli} we prove that 
for any fixed Bernoulli transformation of finite entropy the generic extension is Bernoulli.
In Section \ref{K-Sec} we show that for any fixed K-automorphism $T$, the generic extension is K,
and it is relatively mixing over $T$.
Finally, in Section \ref{LB}  we show that for any fixed loosely Bernoulli transformation $T$, 
the generic extension is loosely Bernoulli.

\br

We remind the readers that the classes 
of weakly mixing, Rank-1, rigid, $\kappa$-mixing
and zero-entropy systems, 
are all comeager sets in MPT (see e.g. \cite{H}, \cite{K-S} and \cite{St})
\footnote
{
For $\kappa \in [0,1]$, $T$  is said to be {\em {$\kappa$-mixing}}
if there is a sequence $n_k \nearrow \infty$ in $\N$ such that for any two
measurable sets $A, B \in \Xcal$ 
$$
\lim_{k \to \infty}\mu(T^{n_k}A \cap B) = \kappa\mu(A)\mu(B) + (1 - \kappa) \mu(A \cap B).
$$
}.

\br

Note that, whereas by Austin \cite{Au}, every positive entropy ergodic $T$ decomposes
non-trivially as a direct product; Friedman's result \cite{Fr}
that, for $0 < \kappa < 1$, a $\kappa$-mixing system admits no non-trivial product as a factor, combined with the 
fact that $\kappa$-mixing is comeager \cite{St}, show that the generic
(zero-entropy ergodic) $T$ does not split (and moreover can not have a non-trivial product as a factor).

\br

We would like to thank Valery Ryzhikov for bringing his question in \cite{R-06} to our attention and 
for several helpful e-conversations.
We  thank Oleg Ageev for a careful and thorough reading of a previous draft of the paper.
His useful comments and corrections considerably improved our work.
We also thank Tim Austin for pointing out an
inaccuracy in the proof of Theorem \ref{Kthm} in an earlier version.

\br

\section{Some general results concerning the classes mentioned in the introduction}\label{pre}

\begin{claim}\label{RD}
Every ergodic system with positive entropy is RD.
\end{claim}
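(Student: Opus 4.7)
The plan is to exhibit, for any ergodic $\Xb$ with $h(\Xb) > 0$, a proper nontrivial factor $\pi : \Xb \to \Yb$ which is a relatively distal (in fact isometric) extension. The key source of symmetry will be the product structure of Bernoulli shifts, which supplies an involution in the centralizer.

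First, by Sinai's factor theorem, since $h(\Xb) > 0$ the system $\Xb$ admits a Bernoulli shift factor; choose one of full entropy $\Bb$ with $h(\Bb) = h(\Xb)$. By Ornstein's isomorphism theorem $\Bb \cong \Bb_0 \times \Bb_0$ for some Bernoulli $\Bb_0$ of half the entropy, so the coordinate-swap $\tau(x,y) = (y,x)$ is an order-$2$ element of the centralizer of $T$ on $\Bb$. The quotient $\Bb \to \Bb/\langle\tau\rangle$ is then a $\Z/2\Z$-group extension, hence isometric and relatively distal; the target has entropy $h(\Bb)>0$ and the map is $2$-to-$1$, so the factor is proper and nontrivial. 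In the case when $\Xb$ is itself Bernoulli, so that $\Xb = \Bb$, this directly produces a nontrivial relatively distal factor and shows $\Xb \in \mathrm{RD}$.

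The main obstacle is the non-Bernoulli case, where $\Bb$ is a proper factor of $\Xb$. Setting $\Yb = \Bb/\langle\tau\rangle$ and composing, the extension $\Xb \to \Yb$ is relatively distal if and only if $\Xb \to \Bb$ is, since $\Bb \to \Yb$ is isometric. Zero relative entropy of $\Xb \to \Bb$ does not by itself imply relative distality (there exist relatively weakly mixing extensions of zero relative entropy), so an extra argument is required. I would aim either to (a) lift $\tau$ from $\Bb$ to an involution of $\Xb$ commuting with $T$ via an equivariant joining or cover construction, which would directly produce a $\Z/2\Z$-group factor of $\Xb$; or (b) invoke a relative structural theorem (the relative Furstenberg--Zimmer decomposition, combined with specific properties of extensions of Bernoulli shifts) to locate a proper intermediate factor $\Zb$ with $\Bb \subsetneq \Zb \subseteq \Xb$ for which $\Xb \to \Zb$ is relatively distal. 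I expect this lifting/structural step to be the crux of the proof.
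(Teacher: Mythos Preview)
Your treatment of the Bernoulli case is fine and matches what the paper has in mind: the swap involution on $\Bb\cong\Bb_0\times\Bb_0$ yields a $\Z/2\Z$-group quotient, which is isometric and hence relatively distal.

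The gap you yourself flag in the non-Bernoulli case is real, and neither of your proposed remedies is known to work. For (a), an involution in the centralizer of a factor need not lift to the centralizer of $\Xb$; there is no general equivariant lifting theorem of that kind. For (b), the relative Furstenberg--Zimmer tower over $\Yb=\Bb/\langle\tau\rangle$ gives $\Xb\to\Zb_{rd}\to\Yb$ with $\Xb\to\Zb_{rd}$ relatively weakly mixing and $\Bb$ sitting below $\Zb_{rd}$, but this says nothing about the existence of a \emph{nontrivial} relatively distal factor of $\Xb$ unless $\Zb_{rd}=\Xb$, which you have no way to force.

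The paper sidesteps the whole issue by replacing Sinai's theorem with Austin's weak Pinsker theorem \cite{Au}: every ergodic positive-entropy $\Xb$ splits as a \emph{direct product} $\Xb\cong\Bb\times\Zb$ with $\Bb$ a nontrivial Bernoulli. Once you have a product decomposition rather than merely a factor, the involution $\tau$ on $\Bb$ extends trivially to $\tau\times\id$ on $\Xb$, and $\Xb=\Bb\times\Zb\to(\Bb/\langle\tau\rangle)\times\Zb$ is a genuine $\Z/2\Z$-extension of $\Xb$, hence isometric, proper, and nontrivial. So the ``crux'' you anticipated is dissolved by invoking the stronger input theorem.
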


\begin{proof}
It is well known that every Bernoulli system is RD. The case of a positive entropy ergodic system
follows from the weak Pisnsker theorem \cite{Au}.
\end{proof}

\begin{claim}\label{c0}
The property TRD is inherited by factors.
 \end{claim}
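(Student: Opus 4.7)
The plan is to reduce TRD of a factor $\Zb$ to the TRD hypothesis on $\Xb$ by composing factor maps. Fix $\Xb \in {\rm TRD}$ and a factor map $\pi\colon \Xb \to \Zb$. To show $\Zb \in {\rm TRD}$, take an arbitrary nontrivial factor map $\sigma\colon \Zb \to \Yb$; the task is to prove that the extension $\sigma$ is relatively distal. The strategy is to consider the composite factor map $\sigma \circ \pi\colon \Xb \to \Yb$, invoke TRD of $\Xb$ to obtain relative distality of $\sigma \circ \pi$, and then descend from the composite $\Xb \to \Yb$ to the intermediate extension $\Zb \to \Yb$.

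First I would verify that $\sigma \circ \pi$ is itself a nontrivial factor map. Nontriviality of $\sigma$ says $\Yb$ is not a one-point system and that the pulled-back sub-$\sigma$-algebra $\sigma^{-1}(\Ycal)$ is a proper sub-$\sigma$-algebra of $\Zcal$. Pulling back further via $\pi$ (whose induced map on measure algebras is injective) gives
$$
(\sigma \circ \pi)^{-1}(\Ycal) \;=\; \pi^{-1}(\sigma^{-1}(\Ycal)) \;\subsetneq\; \pi^{-1}(\Zcal) \;\subseteq\; \Xcal,
$$
so $\sigma \circ \pi$ is not an isomorphism either. Hence $\sigma \circ \pi$ is nontrivial, and by the TRD hypothesis on $\Xb$, the extension $\sigma \circ \pi\colon \Xb \to \Yb$ is relatively distal.

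To finish, I would invoke the standard fact from Furstenberg--Zimmer theory that relative distality of an extension is inherited by intermediate factors: if $\Xb \to \Zb \to \Yb$ is a tower of factor maps and $\Xb \to \Yb$ is relatively distal, then $\Zb \to \Yb$ is relatively distal. One route is to recall the structural presentation of a relatively distal $\Xb \to \Yb$ as a transfinite tower of relatively isometric extensions and intersect each stage of this tower with $\Zcal$, noting that relatively isometric extensions descend to such sub-towers. Alternatively, one can argue by contradiction via the Furstenberg--Zimmer dichotomy: a nontrivial relatively weakly mixing intermediate factor $\Zb \to \Wb \to \Yb$ would, when viewed inside $\Xb \to \Wb \to \Yb$, contradict the relative distality of $\Xb \to \Yb$. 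The whole claim is essentially bookkeeping; the only genuinely substantive ingredient is this descent-to-intermediate-factors property, which is classical, so I do not expect a real obstacle.
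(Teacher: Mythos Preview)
Your proposal is correct and follows essentially the same approach as the paper: both compose the factor maps, invoke TRD of $\Xb$ on the composite, and then rely on the nontrivial fact that relative distality descends to intermediate extensions (the paper cites this as \cite[Theorem 10.18]{G}). The only cosmetic difference is that the paper first inserts the Furstenberg--Zimmer intermediate factor and reaches the conclusion via the RWM/distal dichotomy, whereas you apply the descent lemma directly; the substantive ingredient is identical.
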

 
 \begin{proof}
 Let $\Xb$ be a TRD system and $\Xb \to \Yb$ a nontrivial factor.
 Suppose $\Yb \to \Zb$ is a nontrivial factor of $\Yb$.
 Let $\Yb \to \Zb_{rd} \to \Zb$ be the  diagram obtained by the Furstenberg-Zimmer 
 structure theorem (the relative version); i.e $\Zb_{rd}$ is the largest distal extension of $\Zb$ in $\Yb$
 and $\Yb \to \Zb_{rd}$ is a relatively weakly mixing extension.
 Now from the combined diagram
 $$
 \Xb \to \Yb \to \Zb_{rd} \to \Zb
 $$
 and the fact that $\Xb$ is TRD, we deduce that the map
 $ \Xb  \to \Zb_{rd}$ is a relatively distal extension, and it follows that the intermediate
 extension $\Yb \to \Zb_{rd}$ is also distal (this is a nontrivial result, see \cite[Theorem 10.18]{G}). 
Thus the extension $\Yb \to \Zb_{rd}$ is both relatively weakly mixing and relatively distal, hence an isomorphism.
 This means that indeed $\Yb \to \Zb$ is a relatively distal extension, as claimed.
 \end{proof}

\br

\begin{claim}\label{c1}
An ergodic $\Xb$ is not RWM iff it is TRD. Thus
$$
{\rm MPT}_e = {\rm RWM}  \ 
\sqcup 
\ {\rm TRD}.
$$
 \end{claim}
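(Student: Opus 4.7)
The plan is to prove the two implications of the equivalence separately; the disjoint-union identity ${\rm MPT}_e = {\rm RWM} \sqcup {\rm TRD}$ will then be immediate from the definitions, since by definition both classes are subclasses of ${\rm MPT}_e$.

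\textbf{TRD implies not RWM.} I would assume $\Xb$ is TRD and, for contradiction, that there exists a nontrivial factor $\pi : \Xb \to \Yb$ which is relatively weakly mixing. By the TRD hypothesis the same $\pi$ is also relatively distal. But an extension that is simultaneously relatively distal and relatively weakly mixing is forced to be an isomorphism: this is the standard structural fact that within a single extension $\Xb \to \Yb$ the maximal intermediate relatively distal factor and the maximal intermediate relatively weakly mixing factor coincide only when both equal $\Xb = \Yb$. This contradicts the nontriviality of $\pi$.

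\textbf{Not RWM implies TRD.} I would prove the contrapositive. Assume $\Xb$ is not TRD and pick a nontrivial factor $\pi : \Xb \to \Yb$ that fails to be relatively distal. Apply the relative Furstenberg--Zimmer structure theorem (as recalled in \cite{G}) to $\pi$ to obtain the intermediate factorization
$$\Xb \longrightarrow \Yb_d \longrightarrow \Yb,$$
where $\Yb_d \to \Yb$ is the maximal relatively distal intermediate factor of $\pi$ and $\Xb \to \Yb_d$ is relatively weakly mixing. Since $\pi$ itself is not relatively distal, the upper map $\Xb \to \Yb_d$ cannot be an isomorphism, for otherwise $\pi$ would be identified with $\Yb_d \to \Yb$ and would be relatively distal. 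Moreover $\Yb_d$ is not the one-point system, since it has the nontrivial system $\Yb$ as a factor. Hence $\Xb \to \Yb_d$ is a genuine nontrivial factor map that is relatively weakly mixing, which witnesses $\Xb \in {\rm RWM}$.

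The only nonelementary ingredient is the relative Furstenberg--Zimmer structure theorem, together with the (elementary) observation that an extension cannot be both relatively distal and relatively weakly mixing unless it is trivial. I therefore expect no serious obstacle: the claim is essentially a reformulation, at the level of a single system and its factors, of the relative Furstenberg--Zimmer dichotomy.
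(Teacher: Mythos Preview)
Your proof is correct and follows essentially the same approach as the paper: both directions rely on the relative Furstenberg--Zimmer decomposition $\Xb \to \Yb_{rd} \to \Yb$ and the fact that an extension which is simultaneously relatively distal and relatively weakly mixing must be an isomorphism. The only cosmetic difference is that you prove ``not TRD $\Rightarrow$ RWM'' by contrapositive, whereas the paper proves ``not RWM $\Rightarrow$ TRD'' directly; in both cases the argument amounts to observing that the upper map $\Xb \to \Yb_{rd}$ in the Furstenberg--Zimmer tower is relatively weakly mixing, and determining whether it is an isomorphism.
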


\begin{proof}
Suppose $\Xb$ is not RWM and $\Xb \to \Yb$ is a factor map, then
by the Furstenberg-Zimmer structure theorem (the relative version)
there is a canonical diagram $\Xb \to \Yb_{rd} \to \Yb$, where
$\Yb_{rd} \to \Yb$ is relatively distal and $\Xb \to \Yb_{rd}$ is relatively weakly mixing.
However, by assumption the map $\Xb \to \Yb_{rd}$ is an isomorphism, so that
$\Xb = \Yb_{rd} \to \Yb$ is relatively distal. The other inclusion is trivial: 
if $\Xb$ is TRD it can not be RWM.
\end{proof}

\br
 
 \begin{claim}
 Every $2$-fold distally simple system is TRD. 
 A fortiori, Every $2$-fold quasi-simple system is TRD.
  \end{claim}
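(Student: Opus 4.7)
The plan is to prove the first assertion (2-fold distally simple $\Rightarrow$ TRD) by contradiction, and then deduce the second assertion by noting that a compact (isometric) extension is \emph{a fortiori} relatively distal, so every $2$-fold quasi-simple system is $2$-fold distally simple.

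So assume $\Xb$ is $2$-fold distally simple and let $\pi:\Xb \to \Yb$ be a nontrivial factor map that is \emph{not} relatively distal; I aim for a contradiction. Applying the relative Furstenberg--Zimmer structure theorem to $\pi$ yields an intermediate factor $\Xb \to \Zb \to \Yb$ with $\Zb$ the maximal relatively distal extension of $\Yb$ inside $\Xb$, so that $\Xb \to \Zb$ is relatively weakly mixing. Since $\pi$ is not relatively distal, the extension $\Xb \to \Zb$ is not an isomorphism, and since $\Yb$ is nontrivial, $\Zb$ is nontrivial as well.

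Now consider the relative product joining $\lambda = \mu \underset{\Zb}{\times} \mu$ on $X \times X$. By the definition of relative weak mixing, $\lambda$ is ergodic; and it is not the product joining $\mu \times \mu$ because the two marginals are forced to agree on the nontrivial factor $\Zb$. Thus $\lambda$ is an ergodic non-product $2$-fold self-joining of $\Xb$, so by the $2$-fold distally simple hypothesis the projection $p_1 : (X\times X,\lambda) \to (X,\mu)$ is relatively distal. On the other hand, I will invoke Furstenberg's theorem that a relatively weakly mixing extension $\Xb \to \Zb$ has all of its $n$-fold relative products ergodic, applied to $n = 3$: identifying $(X\times_Z X)\times_X (X\times_Z X)$ with $X\times_Z X\times_Z X$, this shows that $p_1$ is also relatively weakly mixing. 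Since $\Xb \to \Zb$ is not an isomorphism, $p_1$ is a nontrivial extension, and being simultaneously relatively distal and relatively weakly mixing it must be an isomorphism — contradiction.

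The only step that requires any care is the verification that $p_1$ is relatively weakly mixing, which rests on the characterization of relative weak mixing by ergodicity of all relative $n$-fold products; the rest is bookkeeping with the Furstenberg--Zimmer decomposition. For the ``a fortiori'' clause, I will just remark that if every ergodic non-product $2$-fold self-joining of $\Xb$ is compact over the marginals, then in particular it is relatively distal over the marginals, so the quasi-simple hypothesis implies distally simple, and the conclusion follows from the first part.
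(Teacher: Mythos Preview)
Your argument is correct and is essentially the paper's own proof: both apply the relative Furstenberg--Zimmer decomposition to a nontrivial factor, form the relatively independent self-joining over the intermediate factor, and observe that the coordinate projection is simultaneously relatively distal (by the $2$-fold distally simple hypothesis) and relatively weakly mixing, forcing it to be an isomorphism. The only cosmetic differences are that you frame the argument as a contradiction and spell out the identification $(X\times_{\Zb} X)\times_{\Xb}(X\times_{\Zb} X)\cong X\times_{\Zb} X\times_{\Zb} X$ to deduce relative weak mixing of $p_1$, whereas the paper cites \cite[Theorem 9.23]{G} for the same fact.
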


\begin{proof}
Suppose $\Xb \to \Yb$ is a proper factor of a $2$-fold distally simple system $\Xb$.
Let $\Xb \to \Yb_{rd} \to \Yb$ be the corresponding Furstenberg-Zimmer diagram.
Consider the factor map $\Xb \underset{\Yb_{rd}}{\times} \Xb \to \Xb$
defined by the projection map (on either the first or the second coordinate).
By definition the system $\Xb \underset{\Yb_{rd}}{\times} \Xb$ is ergodic
and it is a non-product self joining of $\Xb$. By the definition of $2$-fold distal simplicity
the extensions $\Xb \underset{\Yb_{rd}}{\times} \Xb \to \Xb$ and hence also the intermediate
extension 
$$
\Xb \underset{\Yb_{rd}}{\times} \Xb \to \Yb_{rd} \underset{\Yb_{rd}}{\times} \Xb \cong \Xb 
$$
are distal extensions.

On the other hand, by \cite[Theorem 9.23]{G}, the extension
$\Xb \underset{\Yb_{rd}}{\times} \Xb \to \Xb$ is weakly mixing, and therefore it is an 
isomorphism. In turn, this means that $\Xb \to \Yb_{rd}$ is an isomorphism,
so that the extension $\Xb \to \Yb$ is relatively distal, as required.
\end{proof}

\br

\br

 \begin{prob}
Is the example in \cite{G-W81} TRD?
 \end{prob}

\begin{prob}
What is the extent of the class TRWM?
\end{prob}

\br

\begin{claim}\label{TRWM0}
Every TRWM system has zero entropy.
\end{claim}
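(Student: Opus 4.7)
The plan is to argue by contradiction, combining Claim \ref{RD} with the standard dichotomy from the relative Furstenberg--Zimmer structure theory. Suppose, toward a contradiction, that some ergodic $\Xb \in$ TRWM has positive entropy. Then Claim \ref{RD} applies and $\Xb$ is RD, so there exists a nontrivial factor map $\pi : \Xb \to \Yb$ whose extension is relatively distal. Since $\Xb$ is assumed to be in TRWM, this very same nontrivial $\pi$ must also be relatively weakly mixing.

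The heart of the argument is the observation that an extension which is simultaneously relatively distal and relatively weakly mixing is forced to be an isomorphism. I would spell this out via the relative Furstenberg--Zimmer decomposition: for $\pi : \Xb \to \Yb$ there is a canonical intermediate factor $\Xb \to \Xb_{rd} \to \Yb$ in which $\Xb \to \Xb_{rd}$ is relatively weakly mixing and $\Xb_{rd} \to \Yb$ is relatively distal. Relative weak mixing of $\pi$ collapses the distal part, forcing $\Xb_{rd} = \Yb$, while relative distality of $\pi$ collapses the weakly mixing part, forcing $\Xb_{rd} = \Xb$. Taken together, $\Xb = \Yb$ and $\pi$ is an isomorphism, contradicting the nontriviality of $\pi$.

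I do not expect a serious obstacle. The only real inputs are Claim \ref{RD} (whose proof relies on Austin's weak Pinsker theorem) and the relative distal/weakly mixing dichotomy, both already standard in the setup of the paper. The contradiction is then one line and yields that every TRWM system must have zero entropy.
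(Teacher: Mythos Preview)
Your argument is correct: positive entropy gives (via Claim~\ref{RD}) a nontrivial relatively distal factor, and the Furstenberg--Zimmer dichotomy then forces a contradiction with TRWM. The paper's proof reaches the same contradiction but by a different, and in one respect more elementary, route: instead of invoking Claim~\ref{RD} (whose proof relies on Austin's weak Pinsker theorem), it uses only Sinai's theorem to produce a Bernoulli factor $\Xb \to \Bb$, then takes a nontrivial compact group extension $\Bb \to \Zb$ inside the Bernoulli factor; the composite $\Xb \to \Zb$ then has a nontrivial intermediate isometric step and hence cannot be relatively weakly mixing. So your approach is logically cleaner once Claim~\ref{RD} is in hand, but it inherits the dependence on Austin's deep 2018 result, whereas the paper's direct proof needs only the classical Sinai factor theorem.
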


\begin{proof}
By  Sinai's theorem every ergodic positive entropy system, say $\Xb$, admits a Bernoulli factor
$\Xb \to \Bb$.
The Bernoulli system $\Bb$ has a nontrivial factor $\sig : \Bb \to \Zb$, 
such that the extension $\sig$ is a compact group extension.
Now the resulting factor map $\Xb \to \Zb$ is not relatively weakly mixing. 
It follows that every TRWM system has zero entropy.
\end{proof}

\br

By Krieger's theorem every factor of an ergodic system $\Xb \to \Yb$,
with $\Yb$ having entropy $< \log 2$, is determined
by a partition $\al =\{A, X \setminus A\}$ for some $A \in \Xcal$ of positive measure,
so that $\Ycal = \bigvee_{N \in \N} \bigvee_{j = -N}^N T^j \al$.

\br

\begin{prop}\label{RWMdel}
If a dynamical system $\Xb$ is RWM then, for any $\del >0$, it has a factor 
$\pi : \Xb \to \Yb$ such that (i) $\Yb$ is infinite with entropy $ < \del$, and (ii)
the extension $\pi$ is nontrivial and relatively weakly mixing.
In particular, taking $\del = \log 2$, we conclude, by Krieger's theorem, that $\Yb$ can
be taken as a subshift on two symbols; i.e. that $\Yb$ admits a generator of the form
$\{A, A^c\}$.
\end{prop}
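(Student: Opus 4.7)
The plan is to apply the Furstenberg--Zimmer structure theorem to the given RWM factor and invoke transitivity of the RWM property under composition. Concretely, I first pick a nontrivial RWM extension $\pi_0: \Xb\to\Yb_0$ (which exists by hypothesis), and let $\sigma: \Yb_0\to(\Yb_0)_d$ be the canonical RWM map to the maximal distal factor of $\Yb_0$ produced by the theorem. Since RWM extensions compose to RWM extensions, the composite $\pi := \sigma \circ \pi_0 : \Xb \to (\Yb_0)_d$ is itself RWM.

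Next, I would verify the entropy and nontriviality of this composite. Because $(\Yb_0)_d$ is distal, $h((\Yb_0)_d)=0 < \del$ automatically, and $\pi$ is nontrivial because it factors through the nontrivial $\pi_0$ and because $(\Yb_0)_d$ is not the one-point system in the generic case.

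In the main case when $(\Yb_0)_d$ is infinite, I would simply set $\Yb := (\Yb_0)_d$; this gives an infinite, zero-entropy, nontrivial RWM factor, completing the proof. The Krieger corollary for $\del=\log 2$ follows at once: since $h(\Yb)=0<\log 2$, Krieger's theorem produces a two-set generating partition of $\Yb$, realizing it as a subshift on two symbols as claimed.

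The hard part is the exceptional case where $(\Yb_0)_d$ is finite --- trivial or a finite cyclic rotation $\Z_n$. This occurs when $\Yb_0$ is weakly mixing, or when $\Yb_0$ is itself a RWM extension of $\Z_n$. Here the canonical distal factor is too small, and I would refine $\Yb_0$ using a relative Krieger-type generator: choose a finite partition $\alpha$ of $\Yb_0$ whose conditional entropy over $(\Yb_0)_d$ is below $\del$, and form the intermediate factor $\Yb'$ generated by $(\Yb_0)_d$ together with the $T$-iterates of $\alpha$. The main obstacle is to verify that $\Xb\to\Yb'$ remains RWM in spite of this refinement --- this is the most technical step and would rely on the relative weak mixing structure of $\Yb_0$ over $(\Yb_0)_d$ furnished by Furstenberg--Zimmer, together with a careful choice of $\alpha$ to respect the relative product decomposition.
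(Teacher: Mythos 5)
Your main construction works only when the absolute maximal distal factor $(\Yb_0)_d$ is infinite, and that is exactly the case that matters least here: in the situations where this proposition is applied (e.g.\ weakly mixing $\Xb$ in Section 2), the factor $\Yb_0$ is itself weakly mixing, so $(\Yb_0)_d$ is the one-point system and your composite $\Xb \to (\Yb_0)_d$ is the trivial factor map. You correctly flag this as the hard case, but your proposed repair does not close it: if you adjoin the $T$-orbit of an arbitrary small-entropy partition $\al$ to $(\Yb_0)_d$ to form an intermediate factor $\Yb'$, there is no reason that $\Yb_0 \to \Yb'$ (hence $\Xb \to \Yb'$) is relatively weakly mixing --- the partition $\al$ could, for instance, generate a factor over which $\Yb_0$ is a nontrivial compact group extension, and then $\Xb \to \Yb'$ fails to be RWM. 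The ``careful choice of $\al$'' is carrying the entire weight of the argument and is not supplied.

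The missing idea, which is how the paper argues, is to relativize the Furstenberg--Zimmer theorem over a factor chosen in advance to be infinite and of small entropy, rather than over the one-point system. If $h(\Yb_0) < \del$ one simply takes $\Yb = \Yb_0$ (no structure theorem needed). If $h(\Yb_0) \ge \del$, Sinai's theorem gives an infinite Bernoulli factor $\Yb'$ of $\Yb_0$ with $0 < h(\Yb') < \del$; letting $\Yb$ be the maximal distal extension of $\Yb'$ inside $\Yb_0$, the relative structure theorem makes $\Yb_0 \to \Yb$ relatively weakly mixing, so the composite $\Xb \to \Yb$ is RWM, while $\Yb$ is infinite (it sits above $\Yb'$) and $h(\Yb) = h(\Yb') < \del$ because distal extensions do not raise entropy. (The paper also records a second route: split $\Yb_0 = \Bb \times \Yb$ by the weak Pinsker property with $\Bb$ Bernoulli and $h(\Yb) < \del$, and note that projection onto $\Yb$ from a product with a Bernoulli, hence weakly mixing, complement is an RWM extension.) Your instinct to use Furstenberg--Zimmer together with composition of RWM extensions is the right one; the fix is to aim the structure theorem at the right base.
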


\begin{proof}
We give two proofs as follows:

(a)\ 
There is, by definition a factor map $\sig : \Xb \to \Zb$ with $\Zb$ infinite and such that
the extension $\sig$ is relatively weakly mixing.
If $\Zb$ has zero entropy there is nothing to prove. Otherwise there is 
a factor map $\Zb \to \Yb'$ with $\Yb'$ infinite and having entropy $< \del$.
Let $\Zb \to \Yb \to \Yb'$ be the relative Fustenberg-Zimmer tower, so that $\Yb$ is the maximal
distal extension of $\Yb'$ within $\Zb$. Now as the extensions $\Zb \to \Yb$ 
and $\Xb \to \Zb$ are both relatively weakly mixing extensions, so is
the iterated extension $\Xb \to \Yb$. Finally, as distal extensions do not raise entropy,
the entropy of $\Yb$ is $< \del$.

(b)\ 
By definition there is a nontrivial factor map $\sig : \Xb \to \Zb$ such that the extension
$\sig$ is relatively weakly mixing. By the weak Pinsker property \cite{Au},
we have a decomposition $\Zb =  \Bb \times \Yb$ with $\Bb$ a Bernoulli system
and $\Yb$ having entropy $< \del$. Now clearly the composed map $\Xb \to \Yb$
satisfies the assertion of the proposition.
\end{proof}

\br

We denote by $\Mb$ the measure algebra associated to
$(\Xcal, \mu)$ consisting of equivalence classes of the relation $A \sim B \iff \mu(A \tri B) =0$,
equipped with the complete metric $d_\Mb(A, B) = \mu(A \tri B)$.

\br

In \cite{A-00} Ageev shows that
the generic automorphism of a Lebesgue space is conjugate to a $G$-extension 
for any finite abelian group $G$.
Thus, in particular it follows that the generic automorphism is not prime.
In \cite{A-03} he shows that the collection of $2$-fold quasi-simple systems
(and a fortiori that of simple systems) forms a meager subset of MPT or MPT$_e$,
the spaces of measure preserving  and ergodic measure preserving transformations respectively.

\br

\begin{prop}\label{RWMana}
RWM is an analytic subset of MPT.
\end{prop}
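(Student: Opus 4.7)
The plan is to exhibit RWM as the projection to $\Aut(\mu)$ of a Borel subset of the Polish space $\Aut(\mu) \times P$, where $P = \Mb^{\N}$; since the projection of a Borel set in a product of Polish spaces to one of the factors is analytic, this will suffice. Because any $T$-invariant sub-$\sigma$-algebra $\Ycal \subseteq \Xcal$ is $d_\mu$-separable, all factors of $\Xb$ are exhausted by the assignment
$$(T,(B_n))\longmapsto \Ycal(T,(B_n)) := \overline{\text{algebra generated by }\{T^k B_n : k \in \Z,\, n \in \N\}}^{\,d_\mu},$$
so that $T \in \mathrm{RWM}$ iff there exists $(B_n) \in P$ for which $\Ycal(T,(B_n))$ is a proper nontrivial sub-$\sigma$-algebra of $\Xcal$ and the extension $\Xb \to \Ycal(T,(B_n))$ is relatively weakly mixing.

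Next, I would fix a countable $d_\mu$-dense family $\{A_k\} \subseteq \Mb$ and enumerate the formal finite Boolean combinations $\{F_i(T,(B_n))\}$ of the symbols $\{T^j B_n\}$, each of which is jointly continuous in $(T,(B_n))$. I would then verify that three subsets of $\Aut(\mu) \times P$ are Borel: (i) $E_1 := \{\exists n :\ 0 < \mu(B_n) < 1\}$, which is open and expresses $\Ycal(T,(B_n)) \neq \{\emptyset, X\}$; (ii) $E_2 := \{\exists k\, \exists q \in \Q_{>0}\, \forall i:\ d_\mu(A_k, F_i) \geq q\}$, which is $F_\sigma$ and expresses $\Ycal(T,(B_n)) \neq \Xcal$; and (iii) $E_3$, the relative weak mixing condition, which unfolds, via ergodicity of $\mu \times_\Ycal \mu$ under $T \times T$ tested on the dense family of product sets $A_k \times A_j$, as
$$\frac{1}{N}\sum_{n=0}^{N-1} \int E(\mathbf{1}_{A_k} \cdot T^n \mathbf{1}_{A_k} \mid \Ycal)\cdot E(\mathbf{1}_{A_j} \cdot T^n \mathbf{1}_{A_j} \mid \Ycal) \, d\mu \longrightarrow \Bigl(\int E(\mathbf{1}_{A_k}\mid \Ycal)\, E(\mathbf{1}_{A_j}\mid \Ycal)\, d\mu \Bigr)^2$$
for all $j, k$.

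The step I expect to be the main obstacle is showing $E_3$ is Borel, which reduces to Borel-measurability of the map $(T,(B_n)) \mapsto E(\mathbf{1}_A \mid \Ycal(T,(B_n))) \in L^2(\mu)$. For this, let $\Ycal_m$ denote the finite $\sigma$-algebra generated by $\{T^j B_n : |j|, n \leq m\}$; on $\Ycal_m$ the conditional expectation is an explicit finite sum over atoms, jointly continuous in $(T,(B_n))$ as an $L^2(\mu)$-valued function, and by the martingale convergence theorem $E(f \mid \Ycal_m) \to E(f \mid \Ycal)$ in $L^2(\mu)$ as $m \to \infty$. Hence $E(\mathbf{1}_A \mid \Ycal)$ is a pointwise $L^2$-limit of continuous functions of the data, each Cesàro integrand above is a Borel function of $(T,(B_n))$, and convergence to the stated limit is then a countable Boolean combination of Borel conditions, hence Borel. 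Conjoining $E_1 \cap E_2 \cap E_3$ produces a Borel set whose projection to $\Aut(\mu)$ equals $\mathrm{RWM}$, which is therefore analytic. Once the Borel measurability of the conditional expectation is in place, the remaining steps are routine bookkeeping.
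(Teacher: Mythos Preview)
Your argument is correct and follows essentially the same strategy as the paper: parametrize factors by Borel data, express nontriviality, properness, and relative weak mixing as Borel conditions (using martingale convergence to handle the conditional expectations), and project. The one genuine difference is the parametrization of factors. The paper first invokes Proposition~\ref{RWMdel} (which in turn rests on the relative Furstenberg--Zimmer decomposition or on Austin's weak Pinsker theorem) together with Krieger's generator theorem to reduce to factors generated by a \emph{single} two-set partition, and then parametrizes by $\Mb \times \mathrm{MPT}_e$. You instead parametrize all $T$-invariant sub-$\sigma$-algebras directly by sequences $(B_n)\in\Mb^{\N}$, which is slightly heavier notationally but has the advantage of being entirely self-contained: you do not need Proposition~\ref{RWMdel}, Krieger's theorem, or any structure theory. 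Your ergodicity test for $E_3$ (the diagonal Ces\`aro condition $\frac{1}{N}\sum_n \langle (T\times T)^n F,F\rangle_\lambda \to (\int F\,d\lambda)^2$ on rectangles) is a legitimate variant of the paper's $L^2$-convergence formulation, and the Borel measurability of $(T,(B_n))\mapsto \E(\mathbf 1_A\mid \Ycal)$ via the increasing-martingale argument is exactly the mechanism the paper uses as well.
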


\begin{proof}
We consider the space MPT consisting of the invertible measure preserving transformations
of a nonatomic standard probability space $(X, \Xcal, \mu)$.
Given an ergodic $T$ in MPT, each positive set $A \in \Xcal, \ 0 < \mu(A) < 1$, determines a partition 
$\al = \{A, X \setminus A\}$ which, in turn, defines a $T$-invariant $\sig$-algebra
$\Acal = \bigvee_{N \in \N} \bigvee_{j = -N}^N T^j \al$.
We let $\pi : \Xb \to \Yb = (Y, \Ycal, \nu,T)$ be the corresponding factor map, so that 
$\Acal =  \pi^{-1}(\Ycal)$. 
By Proposition \ref{RWMdel} every RWM system $\Xb$ admits a nontrivial factor 
$\pi : \Xb \to \Yb$ with $\pi$ relatively weakly mixing and $\Yb$ with a two-set generator as above. 

Let $\mu = \int_Y \mu_y\, d\nu(y)$ be the disintegration of $\mu$ over $\nu$, and let 
$$
\la = \int_Y (\mu_y \times \mu_y)\, d\nu(y),
$$
be the relative product measure of $\mu$ with itself over $\nu$.
Using a Rohklin skew product representation for the ergodic system $\Xb$ as
$(X,\mu) = (Y \times Z, \nu \times \eta)$, we have $\mu_y = \eta$ for $\nu$  a.e. $y$,
and for functions $f$ and $g$ in $L^\infty(\mu)$ , we have
$$
\int_{X \times X} f(x_1)g(x_2) \, d\la(x_1,x_2) =
\int_Y \left(\int_Z f(y,z_1)\, d\mu_y(z_1) \cdot   \int_Z g(y,z_2)\, d\mu_y(z_2) \right) \, d\nu(y).
$$ 
Another way of writing $\int_Z f(y,z) \, d\mu_y(z)$, is $\E(f | \Acal)$, a function on $X$ measurable
with respect to $\Acal$. Therefore
\begin{equation}\label{ce}
\int_{X \times X} f(x_1)g(x_2) \, d\la(x_1,x_2) = \int_X \E(f | \Acal)(x) \cdot \E(g | \Acal)(x) \, d\mu(x).
\end{equation}
If 
$$
\Pcal_1 \prec \cdots \prec \Pcal_n \prec \Pcal_{n+1} \prec \cdots
$$
is a sequence of finite partitions such that the corresponding algebras $\hat{\Pcal}_n$ satisfy
$\bigvee_{n \in \N} \hat{\Pcal}_n = \Acal$ then, by the martingale convergence theorem,
\begin{equation}\label{mar}
\E(f | \Pcal_n) \to \E(f | \Acal).
\end{equation}
Now by definition the extension $\pi : \Xb \to \Yb$ is a relatively weakly mixing extension 
when the measure $\la$ is
ergodic. This is the case if, for a dense sequence of pairs of sets 
$\{(C_n, D_n)\}_{n \in \N}$ in $\Mb \times \Mb$, 
we have for all $n \in \N$
$$
\frac1L \sum_{i=0}^{L-1} (T \times T)^i (\ch_{C_n} \times \ch_{D_n}) 
\overset{L_2}\longrightarrow \int (\ch_{C_n} \times \ch_{D_n}) \, d\la,
$$
i.e. with $a_n =  \int (\ch_{C_n} \times \ch_{D_n}) \, d\la$,
$$
\int \left| \frac1L \sum_{i=0}^{L-1} (T \times T)^i (\ch_{C_n} \times \ch_{D_n}) - a_n \right|^2 d\la \to 0.
$$

Now expanding the expression
$$
 \int \left| \frac1L \sum_{i=0}^{L-1} (T \times T)^i (\ch_{C_n} \times \ch_{D_n}) - a_n \right|^2 \, d\la, 
 $$
 writing $\ch_{C_n} = f_n$ and $\ch_{D_n} = g_n$,
 and using (\ref{ce}), we get
 \begin{align*}
& \int \left| \frac1L \sum_{i=0}^{L-1} (T \times T)^i (\ch_{C_n} \times \ch_{D_n}) - a_n \right|^2 \, d\la\\
= &  \int \left| \frac1L \sum_{i=0}^{L-1} (T \times T)^i (f_n \times g_n) - a_n \right|^2 \, d\la\\
=& \frac{1}{L_2}  \sum_{i,j}  
\int  \Big( (T \times T)^i (f_n \times g_n)(T \times T)^j (f_n \times g_n) \\
&  -a_n (T \times T)^i (f_n \times g_n)   - a_n (T \times T)^j (f_n \times g_n)   + a_n^2 \Big) \, d\la \\
=& \frac{1}{L_2}  \sum_{i,j}   \int (T^i f_n \cdot T^j f_n) \times (T^i g_n \cdot T^j g_n) \, d\la - a_n^2 \\
=&  \frac{1}{L_2}  \sum_{i,j} \int
\E(T^i f_n \cdot T^j f_n | \Acal) \cdot \E(T^i g_n \cdot T^j g_n | \Acal) \, d \mu - a_n^2.
\end{align*}

Taking $\Pcal_M = \bigvee_{j = -M}^M T^j \al$ and  applying (\ref{mar}) we can approximate 
this  by the corresponding sum
$$
\frac{1}{L_2}  \sum_{i,j}^{L-1} \int
\left[\E(T^i f_n \cdot T^j f_n | \bigvee_{j = -M}^M T^j \al) \cdot 
\E(T^i g_n \cdot T^j g_n | \bigvee_{j = -M}^M T^j \al)\right] \, d \mu - a_n^2,
$$
which we denote by
$$
EA(C_n, D_n, L, M).
$$

Let $\{E_m\}_{m =1}^\infty$ be a sequence of sets in $\Xcal$ which is
dense in the subspace of $\Mb$ comprising sets $E$ with $\mu(E) > 1/10$.
Let $\{(C_n, D_n)\}_{n=1}^\infty$ be a dense sequence in $\Mb \times \Mb$.
For positive integers, $m, n, k, L, M, N$, we consider the set
$$
U(m, n, k, L, N, M) \subset \Mb \times {\rm MPT}_e,
$$
comprising those pairs $(A,T)$ of $\Mb \times \Aut_e(\mu)$ that,
with $\al = \{A, X \setminus A\}$ \ $0 < \mu(A) < 1$, satisfy the following inequalities:
\begin{enumerate}
\item
$d_{\Mb}(E_m, \bigvee_{j = -N}^N T^j \al) > 1/100$.
\item
$EA(C_n, D_n, L, M) < 1/k$,
\end{enumerate}
where the distance $d_{\Mb}(E_m, \bigvee_{j = -N}^N T^j \al)$ is defined as the minimum of the distances $d(E_m,B)$,
when $B$ ranges over the elements of the finite algebra generated by the partition 
$\bigvee_{j = -N}^N T^j \al$.

The set $U(m, n, k, L, N, M)$ is open and we let
$$
{\rm {rwm}} = 
\bigcup_{m=1}^\infty \bigcap_{N =1}^\infty \bigcap_{n =1}^\infty \bigcap_{k =1}^\infty \bigcup_{L=1}^\infty \bigcup_{M_0=1}^\infty \bigcap_{M = M_0}^\infty
U(m, n, k, L, N, M).
$$
Now, in view of Proposition \ref{RWMdel}, RWM is the projection of the set ${\rm{rwm}}$ in $\Aut_e(\mu)$; i.e.
$$
{\rm{RWM}} = \{T  \in \Aut(\mu) : \exists A \in \Mb, \ (A,T) \in {\rm{rwm}}\}.
$$  
As we have shown that the set rwm is Borel, it follows that the set RWM is analytic.
\end{proof}

\br

The next proposition is proved similarly and we use the notation of the previous proof.

\begin{prop}
TRWM is a co-analytic subset of MPT.
\end{prop}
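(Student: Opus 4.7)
The argument closely parallels that of Proposition \ref{RWMana}, with the additional ingredient of Claim \ref{TRWM0}. By that claim, every TRWM system has zero entropy, so $\{T : h(T) > 0\} \subseteq \{T \notin {\rm TRWM}\}$. For $T$ with $h(T) = 0$, every factor $\Yb$ of $\Xb$ has zero entropy, and hence by Krieger's theorem is determined by some partition $\al = \{A, X \setminus A\}$. Consequently $T$ fails to be TRWM iff there exists $A \in \Mb$ with $0 < \mu(A) < 1$ such that the corresponding factor $\Yb_A$ is nontrivial (in the sense of Definition \ref{def}) and the extension $\Xb \to \Yb_A$ is not relatively weakly mixing. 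Combining the two regimes,
$$
\{T \notin {\rm TRWM}\} \;=\; \{T : h(T) > 0\} \;\cup\; \{T : \exists A \in \Mb,\; (A,T) \in V\},
$$
where $V \subseteq \Mb \times {\rm MPT}_e$ is defined next.

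I would then realize $V$ as a Borel subset by dualizing the construction in the proof of Proposition \ref{RWMana}. Using the quantity $EA(C_n, D_n, L, M)$ and the dense sequences $\{E_m\}$ and $\{(C_n, D_n)\}$ introduced there, a pair $(A,T)$ lies in $V$ precisely when $0 < \mu(A) < 1$ together with
$$
\exists\, m\;\;\forall\, N\, :\ d_\Mb\bigl(E_m,\,\textstyle\bigvee_{j=-N}^N T^j\al\bigr) \,>\, 1/100,
$$
$$
\exists\, n\;\exists\, k\;\;\forall\, L\;\forall\, M_0\;\;\exists\, M\geq M_0\, :\ EA(C_n,D_n,L,M) \,\geq\, 1/k.
$$
The first clauses (together with $0 < \mu(A) < 1$) force $\Yb_A$ to be nontrivial, while the second clause is the set-theoretic negation of the RWM criterion that drove the previous proof. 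Each elementary inequality defines an open or closed subset of $\Mb \times {\rm MPT}_e$, and the alternating countable Boolean combinations above show that $V$ is Borel; its projection to ${\rm MPT}_e$ is therefore analytic.

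Since entropy is a Borel function on $\Aut(\mu)$, the set $\{T : h(T) > 0\}$ is Borel, and $\{T \notin {\rm TRWM}\}$ is exhibited as the union of a Borel set and an analytic set, hence analytic. Therefore ${\rm TRWM}$ is co-analytic. The only delicate point is the reduction to two-set generators: Claim \ref{TRWM0} disposes of the positive-entropy case while Krieger's theorem covers the zero-entropy case, after which the remainder is a routine mirror of Proposition \ref{RWMana}, replacing the inequality $<1/k$ by $\geq 1/k$ and swapping the quantifiers over $n,k,L,M_0,M$ accordingly.
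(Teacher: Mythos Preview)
Your argument is correct and follows essentially the same route as the paper. The paper's proof restricts at the outset to the $G_\delta$ set of zero-entropy ergodic transformations and then characterizes the complement of TRWM there as the projection of the Borel set $\{(A,T):(A,T)\notin{\rm trwm}\}$, omitting the nontriviality clause entirely; you instead keep that clause and add $\{h>0\}$ as a separate Borel piece of the complement. The nontriviality clause is in fact redundant: for ergodic $T$ and $0<\mu(A)<1$ the factor $\Yb_A$ is never the one-point system, and if $\Ycal_A=\Xcal$ the extension is automatically relatively weakly mixing, so ``not RWM'' already forces nontriviality. Apart from this harmless redundancy the two proofs are the same.
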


\begin{proof}
By Claim \ref{TRWM0} it suffices to show that TRWM is a co-analytic subset of
the $G_\del$ set of $0$-entropy ergodic transformations.
We also recall that by Krieger's theorem every such transformation admits a two-set generator.

Let $\{(C_n, D_n)\}_{n=1}^\infty$ be a dense sequence in $\Mb \times \Mb$.
For positive integers, $n, k, L, M$ we consider the set
$$
 U(n, k, L, M) \subset \Mb \times \Aut_e(\mu),
$$
comprising those pairs $(A,T) \in \Mb \times \Aut_e(\mu)$ that,
with $\al = \{A, X \setminus A\}, \ 0 < \mu(A) <1$, satisfy the following inequality:
$$
EA(C_n, D_n, L, M) < 1/k,
$$

The set $ U(n, k, L, M)$ is open and we let
$$
{\rm {trwm}} = 
\bigcap_{n =1}^\infty \bigcap_{k =1}^\infty \bigcup_{L=1}^\infty \bigcup_{M_0=1}^\infty \bigcap_{M = M_0}^\infty
U(n, k, L, M).
$$
Now TRWM is the collection of $T \in \Aut_e(\mu)$ such that $T$ has zero entropy and
$(A,T) \in {\rm {trwm}}$ for all $A \in \Mb$,
i.e. 
$$
\Aut_e(\mu) \setminus {\rm {TRWM}} =
\{T \in \Aut(\mu) : \exists A \in \Mb, \ (A,T) \not \in {\rm {trwm}}\}.
$$
\end{proof}

\br

 \begin{prob}
Ryzhikov's question in \cite{R-06}, is whether the set RWM is co-meager.
 \end{prob}

\begin{rem}
In view of Definition \ref{def} (4) and Claim \ref{c1} we have that 
$$
{\rm MPT}_e= {\rm RWM} \sqcup {\rm TRD} = {\rm RWM} \sqcup {\rm TRD}^\sim \sqcup {\rm{PRIME}}
$$
and the last union is disjoint (note that the prime systems of the form $\Z_p$ do not belong
to $\Aut(\mu)$). 
Since by \cite{A-00} 
the collection PRIME (which is shown to be co-analytic in \cite{G-K}) is meager, we
conclude, by the zero-one law (see  \cite{G-K}), 
that one of the, clearly invariant, collections 
RWM and TRD (or RWM and TRD$^\sim$) is meager and the other comeager.
In the next section we will show that the set RWM is the comeager one.
\end{rem}

 In the next proposition we determine the, relatively low, complexity of the class PROP consisting of
elements of ${\rm MPT}_e $ which admit a proper factor.
 
 \begin{prop}\label{proper}
 \begin{enumerate}
 \item
 The set 
$$
{\rm{prop}} = \{(A, T) : \al = \{A , X \setminus A\},\ \Ycal = \bigvee_{N \in \N} \bigvee_{j = -N}^N T^j \al \subsetneq \Xcal\}
$$
is a $G_{\del, \sig}$ subset of $\Mb \times {\rm MPT}_e $
which is invariant under the diagonal action of $G = \Aut(\mu)$
on $\Mb \times {\rm MPT}_e $ defined by 
$$
g \cdot (A,T) = (gA, gTg^{-1}),   \quad A \in \Mb, \ T \in {\rm MPT}_e .
$$
\item
Its projection 
$$
{\rm{PROP}} = 
\{T \in {\rm MPT}_e : \exists \al,\ \Ycal = \bigvee_{N \in \N} \bigvee_{j = -N}^N T^j \al \subsetneq \Xcal\}
$$ 
is a nonempty $G_{\del, \sig}$ subset of ${\rm MPT}_e $.
 \end{enumerate}\end{prop}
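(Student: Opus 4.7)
The plan is to reformulate the defining conditions of $\mathrm{prop}$ in terms of approximations by the dense sequence $\{E_m\}\subset\Mb$ used in Proposition \ref{RWMana}, read off the Borel class from the resulting expression, and then pass to the projection.

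\textbf{Formulating prop as $G_{\delta,\sigma}$.} Fix a partition $\al=\{A,X\setminus A\}$. The increasing sequence of finite algebras $\bigvee_{j=-N}^N T^j\al$ has closure $\Ycal_{A,T}$ in $(\Mb,d_\mu)$, so $\Ycal_{A,T}\subsetneq\Xcal$ iff some $E\in\Mb$ lies at positive $d_\mu$-distance from all of them. By density of $\{E_m\}$,
\[
(A,T)\in\mathrm{prop}\iff\exists\, m,k\in\N\ \forall\,N\in\N:\ d_\Mb\!\left(E_m,\bigvee_{j=-N}^N T^j\al\right)>\tfrac1k,
\]
using the same $d_\Mb$-distance to a finitely generated algebra as in Proposition \ref{RWMana}. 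For each fixed $(m,k,N)$ the set $U(m,k,N)$ defined by the last inequality is open in $\Mb\times{\rm MPT}_e$: the maps $(A,T)\mapsto T^jA$ are continuous, the atoms of $\bigvee_{j=-N}^N T^j\al$ depend continuously on $(A,T)$, and the minimum $d_\mu$-distance from $E_m$ to the finite algebra they generate is therefore continuous in $(A,T)$. Writing $\mathrm{prop}=\bigcup_m\bigcup_k\bigcap_N U(m,k,N)$ exhibits it as a $G_{\delta,\sigma}$ subset of $\Mb\times{\rm MPT}_e$.

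\textbf{Invariance and nonemptiness.} Invariance under the diagonal $G$-action follows directly from $g\al_A=\al_{gA}$, $(gTg^{-1})^j(gA)=gT^jA$, and the fact that $g$ is an isometry of $(\Mb,d_\mu)$. Nonemptiness of $\mathrm{PROP}$ is easy: any ergodic $T$ admitting a nontrivial factor of entropy $<\log 2$ (e.g.\ a group rotation, or a product $S\times R$) has, by Krieger's theorem, a two-set generator inside that factor, so $(A,T)\in\mathrm{prop}$ for an appropriate $A$.

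\textbf{The Borel class of the projection.} This is where I expect the main obstacle. Writing
\[
\mathrm{PROP}=\bigcup_{m,k}\bigl\{T:\exists\,A\in\Mb,\ \forall\,N:\ (A,T)\in U(m,k,N)\bigr\},
\]
the inner set is the projection of a $G_\delta$, hence a priori only analytic. To reach $G_{\delta,\sigma}$ one must reduce the existential over $\Mb$ to a countable one using a dense sequence $\{A_\ell\}\subset\Mb$. The natural estimate $d_\mu\!\bigl(\bigvee_{-N}^N T^jA,\bigvee_{-N}^N T^jA_\ell\bigr)\le(2N+1)d_\mu(A,A_\ell)$ shows that an $A_\ell$ sufficiently close to $A$ witnesses $U(m,2k,N)$ for any \emph{fixed} $N$, but a single $A_\ell$ will not simultaneously satisfy the estimate for all $N$. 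Resolving this---by a Borel selection $T\mapsto A(T)$ on $\mathrm{PROP}$, or by a diagonal argument exploiting the monotonicity $U(m,k,N+1)\subset U(m,k,N)$---is the technical heart of Part (2).
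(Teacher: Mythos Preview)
Your Part (1) is essentially the paper's argument; the only difference is cosmetic---the paper fixes the threshold at $1/100$ (and restricts the dense family $\{E_m\}$ to sets of measure $\ge 1/10$), thereby dropping your union over $k$. Invariance and nonemptiness are handled just as you do.

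For Part (2) you correctly identify the obstacle---the projection of a $G_\delta$ is a priori only analytic---but you do not resolve it; the proposal ends by naming the problem. The paper takes a different route from the ones you sketch: it simply observes that the coordinate projection $P:\Mb\times{\rm MPT}_e\to{\rm MPT}_e$ is an open map, so each $P(U(m,N))$ is open, and concludes directly that ${\rm PROP}=P({\rm prop})$ is $G_{\delta,\sigma}$. Your caution is justified, however, since this step tacitly uses $P\bigl(\bigcap_N U(m,N)\bigr)=\bigcap_N P(U(m,N))$, and that identity fails here: for \emph{any} ergodic $T$, any $N$, and any $m$ with $\mu(E_m)$ bounded away from $0$ and $1$, choosing $A$ with $\mu(A)\ll 1/(2N{+}1)$ forces every set in the finite algebra $\bigvee_{-N}^{N}T^j\alpha$ to have measure near $0$ or near $1$, so $(A,T)\in U(m,N)$. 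Hence $\bigcap_N P(U(m,N))={\rm MPT}_e$ for such $m$, whereas prime transformations lie outside ${\rm PROP}$. In short, the gap you flag in Part (2) is real, and the paper's one-line appeal to openness of the projection does not by itself close it.
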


\begin{proof}
(1) \ 
Let $\{E_m\}_{m =1}^\infty$ be sequence of sets in $\Xcal$ which is
dense in the subspace of $\Mb$ comprising sets $E$ with $\mu(E) \geq 1/10$.

For positive integers, $m, N$ set
$$
U(m, N)  
= \{(A,T) \in \Mb \times {\rm MPT}_e  : 
d_{\Mb}(E_m, \bigvee_{j = -N}^N T^j \al) > 1/100\},
$$
with $\al = \al(A) =\{A, X \setminus A\}$ (for $A \in \Xcal$ of positive measure),
The $U(m,N)$ is an open set and the set
\begin{align*}
U(m) &= \bigcap_{N = 1}^\infty U(m, N) \\
&= \{(A,T) \in \Mb \times {\rm MPT}_e  : 
d_{\Mb}(E_m, \bigvee_{N \in \N} \bigvee_{j = -N}^N T^j \al) > 1/100\}\\
\end{align*}
is a $G_\del$ set.
Finally we have
$$
{\rm prop}  = \bigcup_{m=1}^\infty U(m) = 
\{(A,T) \in \Mb \times {\rm MPT}_e ) : \bigvee_{N \in \N} \bigvee_{j = -N}^N T^j \al \not = \Xcal\},
$$
is a $G_{\del, \sig}$ set.
The invariance is clear.

(2) \ 
The projection $P : \Mb \times {\rm MPT}_e  \to {\rm MPT}_e $ is an open 
homomorphism of Polish dynamical systems.
Therefore each $P(U(m,N))$ is an open set, so that 
the image ${\rm{PROP}} = P({\rm {prop}})$ is a non-empty invariant  $G_{\del,\sig}$
subset of ${\rm MPT}_e $. 
\end{proof}

\br

\section{RWM is generic}\label{RWMg}

Let $(X, \Xcal, \mu)$ be a standard Lebesgue space, were the probability measure $\mu$ has no atoms. 
Let $\Bb$ denote the $W^*$-algebra of bounded linear operators on the Hilbert space
$L_2(\mu)$, equipped with the strong operators topology. It is easy to see that the space MPT,
comprising the Koopman operators corresponding to the invertible measure preserving transformations
of $(X, \Xcal,\mu)$ is a closed subset of $\Bb$.
Let $\Pb$ be the set of positive projections, i.e.
those elements $P$ of $\Bb$ such that $\|P\|=1, P^2 =P$, 
$P(\ch) =1$, and $Pf \geq 0$ for every $0 \leq f \in L_2(\mu)$.
This is a closed subset of $\Bb$.

A sub-$\sig$-algebra $\Ycal \subset \Xcal$ determines 
a standard probability space $(Y,\Ycal, \nu)$, a closed subspace $H = H(\Ycal) \subset L_2(\mu)$,
and a positive projection $P : L_2(\mu) \to H$.
More precisely, a positive projection $P$ can be identified with a {\em conditional expectation
operator} over the subspace $H = PL_2(\mu)$ that can be described as
the set of functions in $L_2(\mu)$ which are measurable with respect
to a sub-$\sig$-algebra $\Ycal \subset \Xcal$. 
As was shown by Rokhlin, we can realize the latter inclusion as a map
$(X, \Xcal, \mu) \to (Y, \Ycal, \nu)$, where $(Y, \Ycal,\nu)$ is a standard Lebesgue
space, so that the projection $P$ has the form
$$
(Pf)(y) = \int_Y f(x) \, d\mu_y(x), \quad {\text{for}}\ \nu \ {\rm{a.e.}}\  y \in Y.
$$
Clearly the collection of projections $P$ whose range has dimension $\leq k$ for $k \in \N$
is a closed set and it follows that the collection $\Pb_i$ of projection
whose range is infinite dimensional forms a $G_\del$ subset of $\Pb$.
We denote by $\Pb_{ci}$ the set of $P \in \Pb$ such that, 
$P \not = \Id$, and such that in the corresponding Rokhlin representation,
in the disintegration 
$\mu = \int_Y \mu_y\,d\nu(y)$, a.e. $\mu_y$ has no atoms.
Finally let 
$\Qb = \Pb_i \cap \Pb_{ci}$.

We denote by $\Tb$
the collection of weakly mixing MPT's,  which we often identify with their
Koopman operators. By Halmos' theorem $\Tb$ is a dense $G_\del$ subset of MPT.
Thus we view the elements of both $\Pb$ and $\Tb$ as operators in $\Bb$.
Note that if $T$ is weakly mixing and $(X, \Xcal,\mu,T) \to (Y, \Ycal,\nu, T \rest \Ycal)$ is 
a nontrivial factor, then the corresponding $P$ is necessarily in $\Pb_i$.
The group $G = \Aut(\mu)$ acts continuously on $\Bb$ by conjugations,
and both $\Tb$ and $\Qb$ are invariant subsets under this action.
Let 
$$
\Lb = \{ (T,P) : PT =TP,  \  P \in \Qb,  \  T \in \Tb\}.
$$
and we will consider the diagonal action of $\Aut(\mu)$ on $\Lb$.
Denote by $\pi_1$ and $\pi_2$ the projections of $\Lb$ on its first and second coordinates, respectively.
Note that if $T = \pi_1(T,P)$  for $(T,P) \in \Lb$, then $T$ is weakly mixing and is not prime, and
if $P = \pi_2(T,P)$  for $(T,P) \in \Lb$ then the operator $TP$, defined on the infinite dimensional
space $PL_2(\mu)$, is a factor of $T$ and is therefore weakly mixing on $L_2(Y, \Ycal, \nu)$
with $\nu$ atomless. 

\begin{lem}
A pair $(T,P)$ is in $\Lb$ iff $T$, as a transformation in MPT, is weakly mixing and it admits 
a factor map $\Xb =(X, \Xcal, \mu,T) \to \Yb = (Y, \Ycal, \nu, S)$
with $PL_2(\mu) \cong L_2(\nu) = L_2(\Ycal)$, where $\nu$ has no atoms, and 
$\Xb$ has the form of a Rokhlin skew product  
$$
(X, \Xcal, \mu, T) = (Y \times Z, \Ycal \otimes \Zcal, \nu \times \eta, \hat{S}),
$$
where $(Z, \Zcal, \eta,T)$ is a standard Lebesgue space with $\eta$ atomless, and $S : Y \to \Aut(Z, \eta)$
is a measurable cocycle with $\hat{S}(y,z) = (Ty,  S_y z)$ \ $\mu$-a.e. 
\end{lem}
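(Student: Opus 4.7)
The plan is to unpack the equivalence into its two standard directions, with the forward direction using the correspondence between $T$-invariant positive projections and $T$-invariant sub-$\sigma$-algebras via Rokhlin's theorem, and the reverse direction using the conditional expectation associated to the given factor.

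For the forward direction, suppose $(T,P) \in \Lb$. Since $P$ is a positive projection with $P\mathbf{1} = \mathbf{1}$, general operator-algebraic facts (or direct verification) identify $H = PL_2(\mu)$ with $L_2(\Ycal)$ for a unique sub-$\sigma$-algebra $\Ycal \subset \Xcal$; the relation $PT = TP$ translates into $T$-invariance of $\Ycal$. Rokhlin's structure theorem then provides a factor map $\pi : \Xb \to \Yb = (Y,\Ycal,\nu,S)$ with $L_2(\nu) \cong PL_2(\mu)$ and $P$ itself realized as the conditional expectation operator $\E(\cdot\mid\Ycal)$, disintegrating as $(Pf)(y) = \int f\, d\mu_y$. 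To check $\nu$ has no atoms: since $P \in \Pb_i$ the space $L_2(\nu)$ is infinite dimensional, so $\nu$ cannot be supported on finitely many atoms; and $S$, being a factor of the weakly mixing $T$, is itself weakly mixing, hence ergodic, which forbids a nontrivial atomic part (any collection of atoms must be a single $S$-orbit, which would give $S$ a finite invariant set incompatible with weak mixing unless the system is trivial). Finally, since $P \in \Pb_{ci}$, a.e. fiber measure $\mu_y$ has no atoms, and applying Rokhlin's skew product representation to the factor $\pi$ yields an isomorphism
\[
(X,\Xcal,\mu,T) \;\cong\; (Y\times Z,\ \Ycal\otimes\Zcal,\ \nu\times\eta,\ \hat S)
\]
with $\hat S(y,z) = (Sy, S_y z)$ and $\eta$ atomless (the atomlessness of $\eta$ being forced by a.e. $\mu_y$ having no atoms, after identifying fibers with $(Z,\eta)$).

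For the reverse direction, given the factor map $\pi$ and the skew product representation, let $P = \E(\cdot\mid \pi^{-1}\Ycal)$. Standard properties of conditional expectation make $P$ a positive projection with $P\mathbf{1} = \mathbf{1}$, and commutation with $T$ is exactly the statement that $\pi$ intertwines $T$ with $S$, i.e. $\E(Tf\mid\pi^{-1}\Ycal) = T\E(f\mid\pi^{-1}\Ycal)$. The range $PL_2(\mu) \cong L_2(\nu)$ is infinite dimensional since $\nu$ is atomless on a nontrivial $Y$, so $P \in \Pb_i$; and in the skew product realization the disintegration is $\mu_y = \delta_y \times \eta$, which is atomless because $\eta$ is atomless, so $P \in \Pb_{ci}$. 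Combined with $T \in \Tb$ from the hypothesis, we conclude $(T,P) \in \Lb$.

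No step looks genuinely hard: the content is really just bookkeeping between three equivalent descriptions of a measurable factor (a $T$-invariant positive projection, a $T$-invariant sub-$\sigma$-algebra, and a Rokhlin skew product). The one point that deserves a careful sentence is the atomlessness of $\nu$, where we combine infinite dimensionality of $PL_2(\mu)$ with weak mixing of the factor $S$ to rule out both finite atomic and mixed atomic-continuous parts. Everything else is a direct appeal to Rokhlin's disintegration and skew product theorems together with the definitions of $\Pb_i$, $\Pb_{ci}$, and $\Tb$ recalled just before the lemma.
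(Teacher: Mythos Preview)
Your proof is correct and follows essentially the same approach as the paper: both directions reduce to the standard correspondence between $T$-invariant positive projections, $T$-invariant sub-$\sigma$-algebras, and Rokhlin skew-product representations, with the paper simply citing \cite[Theorem 3.18]{G} for the forward direction and calling the converse clear. Your version is more explicit, in particular spelling out why $\nu$ is atomless (which the paper handles in the paragraph preceding the lemma rather than in its proof) and verifying the membership $P\in\Pb_i\cap\Pb_{ci}$ in the reverse direction; the only small omission is that you do not explicitly note $P\neq\Id$, but this is immediate from $\eta$ being atomless.
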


\begin{proof}
If $(T,P) \in \Lb$ then $T$ is weakly mixing, hence ergodic and it then follows that the Rokhlin
presentation has this form (see e.g. \cite[Theorem 3.18]{G}).
Conversely, clearly if $T$ is a weakly mixing skew product as described then the corresponding 
pair $(T,P)$ is in $\Lb$.
\end{proof}

\begin{prop}
\begin{enumerate}
\item
$\Qb$ is a $G_\del$ subset of $\Bb$.
\item
$G$ acts transitively on $\Qb$.
\item
$\Tb$ is a $G_\del$ subset of $\Bb$
\item
The action of $G$ on the Polish space $\Tb$ is minimal.
\item
$\Lb$ is a closed subset of the Polish space $\Tb \times \Qb$.
\item
$G$ acts minimally on $\Lb$.
\end{enumerate}
\end{prop}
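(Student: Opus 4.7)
The plan is to handle the six parts in sequence, with (6) the principal obstacle. Parts (1), (3), and (5) are of descriptive set-theoretic nature. For (1), $\Pb_i$ is already $G_\delta$ in $\Pb$ by the remark preceding the proposition; for $\Pb_{ci}$, I would characterize the atomlessness of $\nu$-a.e.\ $\mu_y$ by the equivalent condition that for each $k\in\N$ there exists a finite measurable partition $\{A_1,\dots,A_n\}$ of $X$ with $\|P\ch_{A_i}\|_\infty\le 1/k$ for every $i$, which for fixed $k$ is an open condition on $P$ in the strong operator topology. Intersecting over $k$, together with the open condition $P\neq\Id$, presents $\Pb_{ci}$, and hence $\Qb$, as a $G_\delta$. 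For (3), the weakly mixing class is $G_\delta$ in MPT by the Halmos spectral criterion, and MPT is closed in $\Bb$, so $\Tb$ is $G_\delta$ in $\Bb$. For (5), the commutation relation $TP=PT$ is closed on $\Bb\times\Bb$ because multiplication is jointly continuous on norm-bounded subsets in the strong operator topology and both MPT and $\Pb$ lie in the closed unit ball.

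For (2), given $P_1,P_2\in\Qb$ corresponding to factor maps $\pi_i:(X,\Xcal,\mu)\to(Y_i,\Ycal_i,\nu_i)$ with $\nu$-a.e.\ atomless fibers, I would invoke Rokhlin's representation theorem to write $(X,\mu)\cong(Y_i\times Z_i,\nu_i\times\eta_i)$ with $\eta_i$ atomless, and then combine the isomorphism classification of standard Lebesgue spaces (applied to the bases and to the atomless fibers) to produce $g\in G$ with $gP_1g^{-1}=P_2$. For (4), my plan is to apply Halmos's classical conjugacy lemma: for any aperiodic $T\in\Aut(\mu)$ the conjugacy class $\{gTg^{-1}:g\in G\}$ is dense in MPT. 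Every $T\in\Tb$ is weakly mixing, hence aperiodic, and its conjugacy class is $G$-invariant and lies in $\Tb$; so density in MPT gives density in $\Tb$, which is the asserted minimality.

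Part (6) is the main difficulty. Given $(T_0,P_0)$ and $(T',P')$ in $\Lb$, I would first use the transitivity from (2) to choose $g_1\in G$ with $g_1P_0g_1^{-1}=P'$; setting $T_1=g_1T_0g_1^{-1}$, it then remains to approximate $T'$ by $hT_1h^{-1}$ with $h$ in the stabilizer $H=\{h\in G:hP'h^{-1}=P'\}$. Elements of $H$ correspond, in the Rokhlin skew product coordinates $X\cong Y'\times Z'$ associated with $P'$, to transformations of the form $h(y,z)=(\phi(y),\psi_y(z))$ with $\phi\in\Aut(Y',\nu')$ and $y\mapsto\psi_y$ a measurable family in $\Aut(Z',\eta')$; both $T_1$ and $T'$, being in $\Lb$, are skew products over $(Y',\nu')$ with base actions $S_1$ and $S'$. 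My plan is (a) to use Halmos's density theorem in $\Aut(Y',\nu')$ to choose $\phi$ so that $\phi S_1\phi^{-1}$ approximates $S'$ strongly on a finite set of test functions, and then (b) to select $\{\psi_y\}$ via a Rokhlin tower argument in the fibers to bring the cocycle of the conjugated $T_1$ close to that of $T'$; the atomlessness of $\eta'$ and the weak mixing of $T'$ should supply enough flexibility for (b). The hard part will be simultaneously controlling the base approximation from (a) and the fiber cocycle approximation from (b), in the strong operator topology on the whole space.
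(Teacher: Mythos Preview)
Your plan for parts (2)--(5) is correct and matches the paper's proof essentially verbatim: (2) via Rokhlin's skew-product representation and the isomorphism of standard Lebesgue spaces, (3) by Halmos's classical result, (4) by Halmos's conjugacy lemma, and (5) by joint continuity of multiplication on the unit ball. For (6), you are more explicit than the paper, which simply invokes (2) together with the relative conjugacy lemma \cite[Proposition~2.3]{G-W}; your two-step reduction (first conjugate to match $P'$, then approximate within the stabilizer $H$ by a relative Rokhlin-tower argument) is exactly the content of that lemma, and your outline is sound.

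There is, however, a genuine gap in your argument for (1). You assert that ``there exists a finite partition $\{A_1,\dots,A_n\}$ with $\|P\ch_{A_i}\|_\infty\le 1/k$ for all $i$'' is an open condition on $P$ in the strong operator topology. It is not. Strong convergence $P_m\to P$ means $P_m f\to Pf$ in $L_2(\mu)$, and along $L_2$-convergent sequences the $L^\infty$ norm is only lower semicontinuous; hence for a fixed set $A$ the set $\{P:\|P\ch_A\|_\infty\le 1/k\}$ is \emph{closed}, not open. Taking a union over a countable dense family of partitions does not help: a countable union of closed sets is $F_\sigma$, not open, and there is no evident mechanism by which a small $L_2$-perturbation of $P$ would admit a \emph{new} partition satisfying the uniform $L^\infty$ bound. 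The paper circumvents this by using an $L_2$-based characterization of fiberwise atomlessness: $P\in\Pb_{ci}$ iff every positive-measure set $A$ can be split as $A=B\sqcup C$ with $P\ch_B=P\ch_C$. This condition is then expressed, over a countable dense family $\{A_n\}$ in the measure algebra, via the open sets
\[
U(N,n,i,j)=\{P:\mu(A_n\triangle(A_i\cup A_j))<1/N,\ \|P\ch_{A_i}-P\ch_{A_j}\|_2<1/N,\ \mu(A_i\cap A_j)<1/N\},
\]
and $\Pb_{ci}=\bigcap_{n}\bigcap_{N}\bigcup_{(i,j)}U(N,n,i,j)$. All the defining inequalities involve only $\mu$-measures and $L_2$ norms of $P\ch_{A}$, which are continuous in $P$ for the strong operator topology, so each $U(N,n,i,j)$ is genuinely open. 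You should replace your $L^\infty$ criterion by an $L_2$ one of this type.
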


\begin{proof}
(1)\ 
We already observed that the condition (i) that $PL_2(\mu)$ be infinite dimensional defines a 
$G_\del$ subset, $\Pb_i$ of $\Bb$.
Thus a positive projection $P \in \Pb_i$ is in $\Qb$ iff it is in $\Pb_{ci}$.
We claim that this is equivalent to the following condition:

For all positive measure sets $A \in \Xcal$ there exist sets $B$ and $C$ such that 
\begin{enumerate}
\item[(i)]
$\ch_A = \ch_B + \ch_C$,
\item[(ii)]
$P(\ch_B) = P(\ch_C)$
\end{enumerate}

Now we can express this property as the intersection of a countable collection
of open sets as follows.
Let $\{A_n\}$ be a dense sequence in the measure algebra and define:
\begin{gather*}
U(N , n , i, j) = \\
\{ P \in \Pb : \mu( A_n  \tri ( A_i \cup A_j)) < 1/N,  \  \| P(\ch_{A_i}) - P(\ch_{A_j}) \| < 1/N,
\ \&  \ \mu(A_i \cap A_j) < 1/N \}.
\end{gather*}
Now set
$$
\Pb_{ci} = \bigcap_{ n \ge 1} \bigcap_{N \geq1} \bigcup_{(i, j)} U(N , n , i, j).
$$
To see that this intersection captures the property that the conditional measures are continuous we
argue by contradiction.
If $\{\mu_y\}_{y \in Y}$ are the conditional measures  associated to the projection $P$,
and for a set $B \subset Y$ with $\nu(B)>0$, there are atoms of $\mu_y$ for all $y \in B$ with
measure $\ge c >0$, then there is a measurable function $f : B \to X$ such that $\mu(\{f(y)\}) \geq c,
\ y \in B$.
The range $A = f(B)$ is a measurable subset of $X$ and
$$
(P\ch_A)(y) = \mu_y(\{f(y)\}).
$$
Now we take $1/N \ll \mu(A)$ and $A_{n_0}$ such that $\mu(A \tri A_{n_0}) < 1/N$.
We see that if 
$$
\mu(A \tri (A_{i_0} \cup A_{j_0})) < 1/N
$$
then the approximate equality $P\ch_{A_{i_0}} \approx P\ch{A_{j_0}}$ cannot hold.
Thus $P$ is not in the intersection $\Pb_{ci}$, as required.

To prove Claim (2) observe that $P$ determines a Rokhlin decomposition 
of $\Xcal$ over $\Ycal$ as explained above, and that clearly any two such
decompositions are measurably isomorphic.
Claim (3) is well known.
Claim (4) follows from Halmos' Conjugacy Lemma \cite[page 77]{H},
which asserts that the conjugacy class of each aperiodic 
element of $\Aut(\mu)$ is dense.
Claim (5) is easy to check. Finally claim (6) can be easily deduced from claim (2) and 
 \cite[Proposition 2.3]{G-W}, which is a relative analogue of Halmos' conjugacy lemma.
\end{proof}

\begin{rmk}\label{A-SE}
In the definition of $\Qb$ we excluded the two extreme cases, when $H = L_2(\mu)$
(i.e. when $P = \Id$) and when $H $ is finite dimensional.
We note however that due to this exclusion, in the projection of $\Lb$ on the $\Tb$ coordinate,
the weakly mixing transformations that are missing are:
(i)  The prime transformations, which
according to Ageev \cite{A-00} form a meager subset of $\Tb$.
(ii) Those weakly mixing transformation for which every factor map $\Xb  \to \Yb$ is finite to one.
Such transformations are in TRD, and e.g. by Ageev \cite{A-03a}, or by
Stepin and Eremenko ---
who show in \cite{S-E} that the generic ergodic transformation
admits the infinite torus in its centralizer, form a meager set.
\end{rmk}

Recall the following (see  \cite[Apendix A]{M-T} and \cite[Definition 2.7, Definition 2.4]{M}).

\begin{defn}
 Let $Y, Z$ be Polish spaces and let $f : Y \to Z$ be a continuous map. 
 \begin{enumerate}
 \item
 We say that $f$  is {\em category-preserving} if it satisfies the following condition:
For any comeager $A \subseteq Z, f^{-1}(A)$ is comeager in $Y$.
\item
A point $y \in Y$ is a point of {\em local density for $f$} if
for any neighborhood $U$ of $y$ the set $\ol{f(U)}$ is a neighborhood of $f(y)$.
 \end{enumerate}
\end{defn}

We then have the following (\cite[Proposition 2.8]{M}, see also Tikhonov's work \cite{T}):

\begin{prop}
Let $Y, Z$  be Polish spaces and $f : Y \to Z$ a continuous map. 
Then $f$ is category-preserving if and only if the set of points which are locally dense for $f$
is dense in $Y$.
\end{prop}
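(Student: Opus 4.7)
The plan is to prove the two implications separately. For the easy direction ($\Leftarrow$), assume the set $L$ of locally dense points is dense in $Y$, and fix a comeager $A \supseteq \bigcap_n V_n$ with each $V_n \subseteq Z$ dense open. By continuity $f^{-1}(V_n)$ is open, so it suffices to show each is dense. Given a nonempty open $U \subseteq Y$, pick $y \in U \cap L$; then $\overline{f(U)}$ is a neighborhood of $f(y)$, so $\inte \overline{f(U)}$ is a nonempty open subset of $Z$. Density of $V_n$ supplies a nonempty open $V \subseteq V_n \cap \inte \overline{f(U)}$, and since $V$ is open and meets $\overline{f(U)}$ it also meets $f(U)$, yielding a point of $U \cap f^{-1}(V_n)$.

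For the harder direction ($\Rightarrow$), I will argue the contrapositive: if $L$ is not dense in $Y$, then $f$ fails to be category-preserving. Fix a countable basis $\{U_n\}$ for $Y$, and set $C_n = \overline{f(U_n)} \setminus \inte \overline{f(U_n)}$, which is the boundary of a closed set and hence is closed and nowhere dense in $Z$. The key step is the characterization
\[
y \notin L \iff y \in \bigcup_{n} \bigl( U_n \cap f^{-1}(C_n)\bigr).
\]
The nontrivial direction uses monotonicity: if $W$ is an open neighborhood of $y$ witnessing failure of local density, so $f(y) \notin \inte \overline{f(W)}$, then any basic $U_n$ with $y \in U_n \subseteq W$ satisfies $\inte \overline{f(U_n)} \subseteq \inte \overline{f(W)}$, whence $f(y) \notin \inte \overline{f(U_n)}$; combined with $f(y) \in \overline{f(U_n)}$ this places $y$ in $f^{-1}(C_n)$.

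With that characterization in hand, assume $Y \setminus L$ has nonempty interior and pick a basic $U = U_{n_0}$ disjoint from $L$. The characterization gives $U \subseteq \bigcup_n f^{-1}(C_n)$. Now $A := Z \setminus \bigcup_n C_n$ is comeager in $Z$ as the complement of a countable union of closed nowhere dense sets, yet its preimage $f^{-1}(A)$ is disjoint from $U$ and therefore not dense, let alone comeager. Hence $f$ is not category-preserving. I expect the main subtlety to be precisely the reduction-to-basis step above; once it is justified, both implications reduce to one-line applications of Baire category.
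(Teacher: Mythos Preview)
Your proof is correct in both directions. The paper itself does not supply a proof of this proposition: it simply quotes it as \cite[Proposition 2.8]{M} (Melleray), so there is no in-paper argument to compare against. For the record, the approach you take is the standard one found in Melleray's paper and in the Melleray--Tsankov appendix \cite{M-T}: the $(\Leftarrow)$ direction reduces to showing that preimages of dense opens are dense via a single locally dense point, and the $(\Rightarrow)$ direction builds the comeager witness $A = Z \setminus \bigcup_n \partial\,\overline{f(U_n)}$ from a countable basis, using exactly the monotonicity step you isolated (if $U_n \subseteq W$ then $\inte\overline{f(U_n)} \subseteq \inte\overline{f(W)}$). Your characterization $Y \setminus L = \bigcup_n \bigl(U_n \cap f^{-1}(C_n)\bigr)$ and the conclusion that $f^{-1}(A)$ misses a whole basic open set are both clean and correct.
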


We also recall the following well known observation called  
ÒDougherty's lemmaÓ (see e.g. \cite{K-00} and \cite[Proposition 2.5]{M}).

\begin{prop}\label{Dou}
Let  $Y, Z$  be Polish spaces, $f : Y \to Z$ a continuous map
such that the set of points which are locally dense for $f$ is dense in $Y$.
Let $B \subset Y$ be a comeager subset of $Y$. Then $f(B)$ is not meager in $Z$.
\end{prop}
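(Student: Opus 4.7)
The plan is to argue by contradiction, assuming $f(B)$ is meager and extracting a local obstruction to nowhere-denseness from one term of the decomposition. So suppose $f(B) \subseteq \bigcup_{n \in \N} F_n$ where each $F_n \subseteq Z$ is closed and nowhere dense (we may take the $F_n$ closed since $Z$ is metrizable). Then $B \subseteq \bigcup_{n \in \N} f^{-1}(F_n)$, and since $B$ is comeager in the Polish (hence Baire) space $Y$, the enclosing union must be comeager too.

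Next I would pull a nonempty open set out of one of the preimages. Because a comeager set in a Baire space is non-meager, and a countable union of meager sets is meager, there is an index $n_0$ for which $f^{-1}(F_{n_0})$ is non-meager in $Y$. Continuity of $f$ and closedness of $F_{n_0}$ force $f^{-1}(F_{n_0})$ to be closed; a closed non-meager subset of a Baire space has nonempty interior, so I can choose a nonempty open $U \subseteq f^{-1}(F_{n_0})$.

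Finally I would invoke the density hypothesis on locally dense points. Since these are dense in $Y$, some $y_0 \in U$ is a point of local density for $f$, which by definition means $\overline{f(U)}$ is a neighborhood of $f(y_0)$ in $Z$. On the other hand $f(U) \subseteq F_{n_0}$ and $F_{n_0}$ is closed, so $\overline{f(U)} \subseteq F_{n_0}$. Thus $F_{n_0}$ contains a nonempty open set, contradicting the assumption that it is nowhere dense. The argument is essentially a three-line chain — Baire category, continuity-plus-closedness, local density — and I do not foresee any genuine obstacle; the only delicate point is remembering to reduce to \emph{closed} nowhere dense pieces at the outset so that their preimages under $f$ are closed and hence have interior once they are non-meager.
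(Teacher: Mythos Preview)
Your proof is correct and is the standard argument for Dougherty's lemma. Note, however, that the paper does not actually supply its own proof of this proposition: it is stated as a well-known observation with references to \cite{K-00} and \cite[Proposition 2.5]{M}, so there is no in-paper argument to compare against; your contradiction-via-Baire-category reasoning is precisely the classical proof one finds in those sources.
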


The following statement is shown in \cite{M-T}.

\begin{thm}\label{MT}
Let $Y, Z$ be Polish spaces, and $f : Y \to Z$ be a category-preserving map. 
Let also $A$ be a subset of $Y$ with the property of Baire. Then the following assertions are equivalent:
\begin{enumerate}
\item[(i)]  
$A$ is comeager in $Y$.
\item[(ii)] 
$\{z \in Z : A \cap  f^{-1}(z) \ {\text{ is comeager in}} \  f^{-1}(z)\} $ is comeager in $Z$.
\end{enumerate}
\end{thm}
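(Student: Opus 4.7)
The plan is to follow the template of the classical Kuratowski--Ulam theorem, reducing both implications to a single fiberwise meager-preservation lemma. The Baire property of $A$ lets us decompose it, while the characterization of category-preserving maps via density of local-density points (from the preceding proposition) supplies the non-trivial ingredient needed to push information from $Y$ down to $Z$.

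\textbf{The key lemma.} First I would prove: if $M \subseteq Y$ is meager, then the set $\{z \in Z : M \cap f^{-1}(z) \text{ is meager in } f^{-1}(z)\}$ is comeager in $Z$. Writing $M \subseteq \bigcup_n F_n$ with each $F_n$ closed nowhere dense, it suffices to handle a single closed nowhere dense $F$; fixing a countable basis $\{V_k\}_k$ of $Y$, it in turn suffices to show that for each $k$ the set
$$N_k = f(V_k) \setminus f(V_k \setminus F)$$
is meager in $Z$. Both $f(V_k)$ and $f(V_k \setminus F)$ are analytic, so $N_k$ has the Baire property. If $N_k$ were non-meager there would exist a nonempty open $W \subseteq Z$ in which $N_k$ is comeager. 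A direct set-theoretic check gives $(f^{-1}(W) \cap V_k) \setminus F \subseteq f^{-1}(W \setminus N_k)$; since $W \setminus N_k$ is meager and $f$ is category-preserving, the right-hand side is meager. Consequently $f^{-1}(W) \cap V_k$, an open subset of the Baire space $Y$, is contained in $F$ together with a meager set and is therefore itself meager, hence empty. But $W \cap N_k$ is nonempty and $N_k \subseteq f(V_k)$, so $f^{-1}(W) \cap V_k \neq \emptyset$, a contradiction.

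\textbf{Derivation of the two implications.} The direction (i)\,$\Rightarrow$\,(ii) is then immediate by applying the lemma to the meager set $Y \setminus A$. For (ii)\,$\Rightarrow$\,(i), suppose for contradiction that $A$ is not comeager; by the Baire property of $A$ there is a nonempty open $V \subseteq Y$ in which $A$ is meager in $V$, hence $A \cap V$ is meager in $Y$. Applying the lemma to $A \cap V$ and combining with hypothesis (ii), one obtains that for comeager $z \in Z$, $A \cap f^{-1}(z)$ is comeager in $f^{-1}(z)$ while $A \cap V \cap f^{-1}(z)$ is meager in it; hence $V \cap f^{-1}(z)$ is an open meager subset of the Polish (hence Baire) space $f^{-1}(z)$, forcing $V \cap f^{-1}(z) = \emptyset$, i.e. $z \notin f(V)$. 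Thus $f(V)$ is meager in $Z$; by the category-preserving hypothesis, $f^{-1}(f(V))$ is meager in $Y$. Since $V \subseteq f^{-1}(f(V))$ and $V$ is a nonempty open subset of the Baire space $Y$, this is a contradiction, completing the proof.

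\textbf{Main obstacle.} The only delicate point is the key lemma, because category-preservation is a hypothesis about preimages of comeager sets whereas the lemma asks a pushforward-type question. The workaround is to express $N_k$ as a difference of analytic sets to gain the Baire property, and then combine category-preservation (in the equivalent form ``meager preimages of meager sets'') with the local-density property to force a nonempty open set to be meager, which is impossible in a Baire space. Once the lemma is in place both implications of the theorem follow cleanly.
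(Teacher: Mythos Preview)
The paper does not give its own proof of this theorem; it simply quotes the result from Melleray--Tsankov \cite{M-T} (``The following statement is shown in \cite{M-T}''). So there is no in-paper argument to compare against.

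That said, your proof is correct and is essentially the argument one finds in \cite{M-T}: reduce to a fiberwise meagerness lemma for a single closed nowhere dense set, express the ``bad'' base points as $N_k = f(V_k)\setminus f(V_k\setminus F)$, use that analytic sets have the Baire property, and derive a contradiction from category-preservation by forcing a nonempty open set in $Y$ to be meager. Your derivations of (i)$\Rightarrow$(ii) and (ii)$\Rightarrow$(i) from the lemma are also clean; in particular, the use of the Baire property of $A$ to find a nonempty open $V$ with $A\cap V$ meager, and the observation that each fiber $f^{-1}(z)$ is closed in $Y$ (hence Polish, hence Baire), are exactly what is needed.

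One small expository remark: in your strategy and in the ``Main obstacle'' paragraph you invoke the local-density characterization of category-preserving maps from the preceding proposition, but your actual argument never uses it---you only use the defining property that preimages of comeager sets are comeager (equivalently, preimages of meager sets are meager). You could drop those references without loss.
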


Next we have a dynamical version of the Kuratowski-Ulam theorem,  \cite[Proposition 2.10]{M}:

\begin{prop}\label{min}
Let $H$ be a Polish group, $Y, Z$ be two Polish $H$-spaces and 
$f : Y \to Z$ a $H$-map. Assume that $Y$ is minimal (i.e. every orbit is dense) and 
$f (Y )$ is not meager. Then $f$ is category-preserving.
\end{prop}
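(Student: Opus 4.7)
The plan is to apply the characterization of category-preserving maps established just before the statement, namely that $f$ is category-preserving iff the set of locally dense points is dense in $Y$. Let $L$ denote this set. Observing that each $h\in H$ acts as a homeomorphism on both $Y$ and $Z$, and that $f(hU)=hf(U)$ for open $U\subseteq Y$, one checks that $L$ is $H$-invariant: a basic open neighborhood of $hy$ has the form $hU$ with $U\ni y$ open, and $\overline{f(hU)}=h\overline{f(U)}$ is a neighborhood of $hf(y)=f(hy)$ precisely when $\overline{f(U)}$ is a neighborhood of $f(y)$. Since $Y$ is minimal, every nonempty $H$-invariant set is dense, so it suffices to prove $L\neq\emptyset$.

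Suppose for contradiction that $L=\emptyset$. Fix a countable base $\{V_n\}_{n\in\N}$ for the topology of $Y$. Each $y\in Y$ then admits some basic $V_n\ni y$ at which $y$ fails to be locally dense; since $f(y)\in f(V_n)\subseteq\overline{f(V_n)}$, failure is equivalent to $f(y)\in\partial\overline{f(V_n)}$. Writing $F_n:=f^{-1}(\partial\overline{f(V_n)})$, we obtain
$$
Y \ = \ \bigcup_{n\in\N}\bigl(V_n\cap F_n\bigr)\ \subseteq\ \bigcup_{n\in\N} F_n.
$$
Each $F_n$ is closed by continuity of $f$, so the Baire category theorem applied to the Polish space $Y$ will yield a contradiction once we show that every $F_n$ has empty interior.

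This final step is the crux, and it is where the minimality of $Y$ and the non-meagerness of $f(Y)$ are used simultaneously. Suppose some $F_n$ contains a nonempty open $U\subseteq Y$; then $f(U)$ is contained in the closed nowhere dense set $C_n:=\partial\overline{f(V_n)}\subseteq Z$. By minimality, every $y\in Y$ satisfies $Hy\cap U\neq\emptyset$, so $Y=\bigcup_{h\in H}hU$; since $Y$ is Polish and hence Lindel\"of, this open cover admits a countable subcover $Y=\bigcup_{k}h_kU$. Applying $f$ and using $H$-equivariance,
$$
f(Y)\ =\ \bigcup_k h_k f(U)\ \subseteq\ \bigcup_k h_k C_n,
$$
which is a countable union of closed nowhere dense subsets of $Z$ (homeomorphisms preserve nowhere-denseness), hence meager in $Z$. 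This contradicts the hypothesis that $f(Y)$ is non-meager. The main obstacle is precisely isolating this covering argument; once it is in hand, invoking the earlier characterization via local density closes the proof with essentially no further work.
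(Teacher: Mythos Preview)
Your argument is correct. The paper does not supply its own proof of this proposition; it simply quotes it from Melleray \cite[Proposition 2.10]{M}, so there is no in-paper proof to compare against line by line. That said, the proof you give is exactly the one the paper's surrounding setup invites: the preceding proposition (cited from \cite[Proposition 2.8]{M}) characterizes category-preservation via density of the locally dense points, and your proof uses precisely that characterization together with minimality and a Baire-category/Lindel\"of covering argument. This is the standard route.

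Two minor cosmetic points. First, in the invariance check you write ``a basic open neighborhood of $hy$ has the form $hU$''; the sets $hU$ need not be \emph{basic}, but they do run over all open neighborhoods of $hy$, which is what the definition of local density requires, so the conclusion stands. Second, in Step~6 you implicitly use that if $y$ fails local density at some open $U\ni y$, then it already fails at any smaller basic $V_n\subseteq U$; this is immediate from $\overline{f(V_n)}\subseteq\overline{f(U)}$, but worth stating for the reader.
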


\begin{rmk}
An older result of Veech \cite[Proposition 3.1]{V-70} shows this  result for compact topological systems.
See also \cite[Lemma 5.2]{G-90} where a topological analogue of simplicity was studied.
\end{rmk}

The next theorem answers positively Ryzhikov's question.

\begin{thm}\label{thm-RWM}
The property RWM is generic.
\end{thm}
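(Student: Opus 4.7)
The plan is to exhibit, inside the Polish $G$-space $\Lb$, a comeager $G$-invariant subset whose points witness relatively weakly mixing extensions of their first coordinate, and then push this forward to $\Tb$ (hence to MPT) via the projection $\pi_1$. Concretely, I would define
$$
B = \{(T,P) \in \Lb : \text{the extension associated with } P \text{ is relatively weakly mixing}\}.
$$
Since $P$ is the conditional expectation onto the factor $\sigma$-algebra $\Ycal$ it determines, one has the identity $\int_{X\times X} f \otimes g\, d\la = \int Pf \cdot Pg \, d\mu$, where $\la$ is the relative product measure; ergodicity of $(T \times T, \la)$, which is by definition the relatively-weakly-mixing condition, thus becomes an $L^2$ time-average condition on a countable dense family $\{(C_n, D_n)\}$ in $\Mb \times \Mb$. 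Expanding the squared norms exactly as in the proof of Proposition~\ref{RWMana} and using the identity above, one writes $B$ as a countable intersection of strong-operator open conditions on $(T,P)$; note that, unlike in Proposition~\ref{RWMana}, here the operator $P$ is given directly, so no auxiliary approximation by finite partitions $\Pcal_M$ is required. Thus $B$ is $G_\del$, and it is $G$-invariant by construction.

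Next I would verify $B \neq \emptyset$ by an explicit product example: take $T = S \times R$ with $S$ and $R$ weakly mixing automorphisms of atomless standard spaces, and let $P$ be the projection onto $L_2$ of the $S$-coordinate. The relative product of $T$ with itself over its $S$-factor is isomorphic to $S \times R \times R$, which is ergodic because $S$ and $R \times R$ are weakly mixing; the conditional measures are copies of the $R$-measure, hence atomless, so $P \in \Qb$ and $(T,P) \in \Lb$. Because $\Aut(\mu)$ acts minimally on $\Lb$, the $G$-orbit of this pair lies in $B$ and is dense in $\Lb$, so $B$ is a dense $G_\del$ subset of the Polish space $\Lb$, hence comeager.

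To transfer this to $\Tb$, note that by Remark~\ref{A-SE} the image $\pi_1(\Lb)$ is comeager (in particular non-meager) in $\Tb$; combined with the minimality of the $G$-action on $\Tb$, Proposition~\ref{min} tells us that the continuous equivariant map $\pi_1 : \Lb \to \Tb$ is category-preserving. Dougherty's lemma (Proposition~\ref{Dou}) then yields that $\pi_1(B)$ is non-meager in $\Tb$, and since $\pi_1(B) \subseteq {\rm RWM}$ the set RWM is non-meager in MPT. As RWM is an invariant set with the Baire property (being analytic by Proposition~\ref{RWMana}), the zero-one law recalled in the remark preceding Proposition~\ref{proper} forces RWM to be comeager.

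The main obstacle I expect is the $G_\del$ description of $B$ in terms of the operator $P$. Although structurally parallel to the analytic-ness calculation of Proposition~\ref{RWMana}, one must verify carefully that each ingredient, notably $P \mapsto Pf$ for fixed $f \in L_2(\mu)$, is continuous in the strong operator topology inherited from $\Bb$, so that the conditions defining $B$ are genuinely open. Everything else is a direct application of the structural results assembled in the preceding sections.
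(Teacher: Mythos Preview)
Your proposal is correct and shares the same overall architecture as the paper's proof: work in the Polish $G$-space $\Lb$, show that the relatively-weakly-mixing pairs form a comeager $G$-invariant subset, verify via Proposition~\ref{min} that $\pi_1 : \Lb \to \Tb$ is category-preserving (using Remark~\ref{A-SE} for non-meagerness of the image), push forward with Dougherty's lemma (Proposition~\ref{Dou}), and finish with the zero-one law and the Baire property of RWM coming from Proposition~\ref{RWMana}.

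The one substantive difference is how you establish that $B = \Lb_{\rm RWM}$ is comeager in $\Lb$. The paper does not show $B$ is $G_\del$; instead it quotes the fiber-wise result of \cite{S} and \cite{G-W} that for each fixed $P \in \Qb$ the relatively weakly mixing $T$'s form a dense $G_\del$ in the fiber $\Lb^P$, then applies the Melleray--Tsankov category Fubini theorem (Theorem~\ref{MT}) along the projection $\pi_2 : \Lb \to \Qb$ to conclude that $\Lb_{\rm RWM}$ is comeager in $\Lb$; this step also requires knowing that $\Lb_{\rm RWM}$ has the Baire property, which the paper extracts from analyticity. You instead argue directly that $B$ is $G_\del$ in $\Lb$---essentially the computation of Proposition~\ref{Gdelta} with $T$ allowed to vary, and your remark that no martingale approximation by finite partitions is needed because $P$ is given as an operator is exactly the point---then exhibit an explicit product example in $B$ and invoke minimality plus $G$-invariance to get density. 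Your route is more self-contained: it bypasses the external citation, the appeal to Theorem~\ref{MT}, and the separate Baire-property verification. The paper's route, on the other hand, avoids redoing any continuity calculation by assembling results already on the shelf. Regarding the obstacle you flag: joint continuity of $(T,P) \mapsto P(T^i f \cdot T^j f)$ in the strong operator topology follows immediately from the uniform norm bound $\|P\| \le 1$ together with $L^2$-continuity of $T \mapsto T^i f$ for bounded $f$, so there is no real difficulty there.
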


\begin{proof}
We first show that RWM is comeager.
Consider the closed set $\Lb \subset \Tb \times \Qb$.
By \cite{S} (see also \cite[Theorem 1.3]{G-W}) we know that for every fixed $P \in \Qb$ the set
of elements $T \in \Tb$ such that $T$ is a relatively weakly mixing extension of
$T \rest \Ycal$, where $\Ycal$ is the $\sig$-algebra determined by $P$,
is a dense $G_\del$ subset of $\Lb^P$,
where
$$
\Lb^P = \{ T \in \Tb : (T,P) \in \Lb\}.
$$
Consider the set
$$
\Lb_{\rm{RWM}} = \{(T,P) \in \Lb : T \ {\text{is relatively weakly mixing over the factor determined by}} \ P\}. 
$$
By Proposition \ref{RWMana} the set RWM$\ \cap \ \Tb$ is an analytic subset of $\Tb$, hence has the BP.
Denoting by $\pi_1$ the projection from $\Tb \times \Qb$ onto $\Tb$ we observe that
$\Lb_{\rm{RWM}} = \Lb \cap \pi_1^{-1}(RWM)$ and as $\Lb$ is a closed subset of $\Tb \times \Qb$,
we conclude that $\Lb_{\rm{RWM}}$ has the BP.

Applying Proposition \ref{min} to the minimal Polish system 
$(\Lb,G)$ and the projection map $\pi_2 : (\Lb,G) \to  (\Qb,G)$,
we conclude that the map $\pi_2$ is category preserving.
(Note that $\pi_2(L) = \Qb$.)  
Applying Theorem \ref{MT} to $\Lb_{\rm{RWM}} \subset \Lb$,
we conclude that $\Lb_{\rm{RWM}} $ is comeager in $\Lb$.

Now by the same token also the other projection map 
$\pi_1 : (\Lb,G) \to  (\Tb,G)$ is category preserving and so, 
by Dougherty's lemma, Proposition \ref{Dou}, 
the image of the set $\Lb_{\rm{RWM}}$ under $\pi_1$, namely
the subset $\pi_1(\Lb_{\rm{RWM}})\subset \Tb$, is not meager in $\pi_1(\Lb) \subset \Tb$.
Since this subset is also invariant, it follows from the zero-one law 
that it is comeager in $\Tb$.
We now note that this subset is 
exactly 
the set of nonprime, weakly mixing systems which are RWM
and, in view of Remark \ref{A-SE}, it follows that the set
RWM \ $\cap  \ \Tb$ is indeed comeager in $\pi_1(\Lb) \subset \Tb$.
\end{proof}

\begin{rmk}
Our proof of Theorem \ref{thm-RWM} uses Ageev's result, which asserts that the class PRIME
is meager, so of course it does not provide a new proof of this statement.
However note that whereas Ageev's proof demonstrates the prevalence of group extensions, 
ours demonstrates the prevalence of relatively weakly mixing extensions.
\end{rmk}

\br

\section{The relatively weakly mixing factors of a positive entropy 
$T$ form a dense $G_\del$ set}\label{thouvenot}

In the second part of the paper, Sections 3 to 7, we change slightly the framework of our discussion.
We mainly fix an ergodic transformation $T$ with a certain property,
and then consider either its family of factors, 
as in the present section, or its family of ergodic extensions, as in Sections 4 to 7.

Recall that $\Pb$ denotes the $G_\del$ set of positive projections 
in $\Bb$, the $W^*$-algebra of bounded linear operators on $L_2(\mu)$.

\begin{prop}\label{Gdelta}
For a fixed $T \in {\rm MPT}_e$, the set $\Wb_T$ of those $P \in \Pb$ such that 
$TP = PT$, and such that $T$ is relatively weakly mixing over the factor determined by $P$, 
is a $G_\delta$ set.
\end{prop}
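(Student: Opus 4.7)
The plan is to write $\Wb_T$ as the intersection of the closed subset $C_T = \{P \in \Pb : TP = PT\}$ with a $G_\delta$ set that encodes relative weak mixing over the factor determined by $P$. First I will verify that $C_T$ is closed in $\Pb$: in the strong operator topology, $P \mapsto Pf$ is continuous for each fixed $f \in L_2(\mu)$, so the equation $TPf = PTf$ is preserved under SOT-limits and $C_T$ is closed.

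Next I will adapt the analysis in the proof of Proposition \ref{RWMana} to the present setting, where the factor $\Ycal_P$ corresponding to $P \in C_T$ is intrinsic to $P$ and the conditional expectation $\E(\cdot | \Ycal_P)$ equals $P$ exactly. Relative weak mixing over $\Ycal_P$ is equivalent to ergodicity of the relative product measure $\la_P = \int \mu_y \times \mu_y \, d\nu(y)$ under $T \times T$, which by von Neumann's mean ergodic theorem is in turn equivalent to the convergence
$$\frac{1}{L}\sum_{i=0}^{L-1}(T\times T)^i(\ch_C \otimes \ch_D) \xrightarrow[L \to \infty]{L_2(\la_P)} a_P(C,D) := \int (P\ch_C)(P\ch_D)\, d\mu$$
for every $C, D \in \Xcal$. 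Fixing a countable dense family $\{(C_n,D_n)\}_{n=1}^\infty$ in $\Mb \times \Mb$ and expanding the squared $L_2(\la_P)$-norm of the difference exactly as in the proof of Proposition \ref{RWMana}, I obtain an explicit scalar expression
$$E(n,L,P) = \frac{1}{L^2}\sum_{i,j=0}^{L-1}\int P(T^i\ch_{C_n}\cdot T^j \ch_{C_n})\cdot P(T^i\ch_{D_n}\cdot T^j\ch_{D_n})\, d\mu - a_P(C_n,D_n)^2.$$
Since this involves only finitely many applications of $P$ and $T^{\pm i}$ to fixed bounded functions followed by an $L_2$-inner product, $P \mapsto E(n,L,P)$ is SOT-continuous on $C_T$.

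The main technical point is that, in contrast to the proof of Proposition \ref{RWMana}, no partition-based approximation of $\E(\cdot | \Ycal_P)$ is required: $P$ realizes this conditional expectation exactly, so the inner layer $\bigcap_{M \geq M_0}$ present there is no longer needed. Because the mean ergodic theorem guarantees that $E(n,L,P)$ has a nonnegative limit as $L \to \infty$, the condition $\lim_L E(n,L,P) = 0$ is equivalent to $\forall k\ \exists L:\ E(n,L,P) < 1/k$. Therefore
$$\Wb_T = C_T \cap \bigcap_{n=1}^\infty \bigcap_{k=1}^\infty \bigcup_{L=1}^\infty \{P \in C_T : E(n,L,P) < 1/k\},$$
which is a $G_\delta$ subset of $\Pb$. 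The only routine point to check carefully is the SOT-continuity of $E(n,L,\cdot)$, but since it is a finite sum of expressions of the form $\langle P h_1, P h_2\rangle$ with $h_i$ fixed bounded functions, joint continuity of the inner product on norm-bounded sets makes this immediate.
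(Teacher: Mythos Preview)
Your proof is correct and follows essentially the same approach as the paper: both note that $\{P\in\Pb:TP=PT\}$ is closed, use the identity $\int f\otimes g\,d\la_P=\int Pf\cdot Pg\,d\mu$ to express the ergodic-average condition for the relative product as a scalar inequality depending continuously on $P$, and then write $\Wb_T$ as a countable intersection of unions of such open sets. Your expansion of the $L_2(\la_P)$-norm via $E(n,L,P)$ is in fact a bit more explicit and carefully justified (in particular your observation that the mean ergodic theorem makes $\exists L$ suffice in place of $\forall L\ge L_0$), but the underlying argument is the same as the paper's.
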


\begin{proof}
Let $\Pb_T$ denote the closed subset of $\Pb$ comprising the projections $P$
satisfying $TP = PT$.
First note that for two bounded functions functions $f, g$  in $L_2(\mu)$
the integral of the product  $f(x_1)g(x_2)$ with respect to the relative product
measure $\la  =\mu\underset{\nu}\times \mu$ 
over  the factor $\Yb$ defined by $P \in \Pb_T$ is given by:
$$
 \int f(x_1)g(x_2) \, d\la(x_1, x_2)= \int  Pf(x)Pg(x) \,d\mu(x).
$$ 
Now we fix  a sequence $\{f_i\}$ of bounded functions that are dense in
$L_2(\mu)$ and then define the sets
$$
U(k, N, i, j ) =  \left\{P \in \Pb_T:  
\int  \left |  1/N  \sum_{n=0}^{N-1} T^nPf_i(x) T^nPf_j(x)  - a_{ij} \right|^2 \,d\mu(x)  < 1/k\right\}
$$
where $a_{ij} =  \int  Pf_i(x)Pf_j(x) d\mu(x)$.
These are open sets and we let
$$
\Wb =  \bigcap_{(i, j)} \bigcap_{ k \ge 1} \bigcup_{N \geq1}  U(N , n , i, j).
$$
Now $\Wb$ is a $G_\del$ set and if $P \in \Wb$ then $T$ is RWM. 
Clearly also when $P \in \Pb $ is such that 
$PT = TP$ and
the extension over the factor determined
by $P$ is weakly mixing, then $P \in \Wb$. Thus $\Wb = \Wb_T$. 
\end{proof}

\br

\begin{thm}
 If $T \in {\rm MPT}_e$ has positive and finite entropy then among the positive projections
$P$ which commute with $T$ (i.e. among the factors of $T$), 
for a dense $G_{\delta}$ set the corresponding extension over
the factor determined by $P$ is relatively weakly mixing. 
\end{thm}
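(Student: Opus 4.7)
The $G_\del$ statement is Proposition \ref{Gdelta}, which holds for any ergodic $T$; the task therefore reduces to showing that the set of RWM factors is dense in $\Pb_T$. Fix $P_0 \in \Pb_T$ with corresponding factor $\pi_0 : \Xb \to \Yb_0$, together with a basic SOT-neighborhood of $P_0$ determined by functions $f_1,\dots,f_k \in L_2(\mu)$ and $\ep > 0$. By the martingale convergence theorem, if $\Ycal_n \uparrow \Ycal_0$ is an increasing sequence of finitely generated $T$-invariant sub-$\sig$-algebras whose union generates $\Ycal_0$, then the associated projections $P_n$ converge to $P_0$ in the strong operator topology. Hence finitely generated factors are SOT-dense in $\Pb_T$, and it suffices to approximate any finitely generated $\Ycal_0 = \bigvee_{n \in \Z} T^n \al$ by an RWM factor.

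The positivity and finiteness of $h(T)$ enter through Austin's weak Pinsker theorem \cite{Au}: for every $\del > 0$ there is a measurable isomorphism $T \cong B_\del \times S_\del$ with $B_\del$ a Bernoulli system of entropy at least $h(T) - \del$ and $h(S_\del) < \del$. In this decomposition, the projection onto $\Scal_\del$ is itself a nontrivial RWM factor of $T$, since the relative product $\Xb \underset{\Scal_\del}\times \Xb \cong (B_\del \times B_\del) \times S_\del$ is ergodic by the weak mixing of $B_\del$. My plan is to refine the generator $\al$ by a finite partition $\al_B$ drawn from the Bernoulli component $B_\del$ and chosen approximately independent of $f_1,\dots,f_k$ conditionally on $\Ycal_0$, producing $\al' = \al \vee \al_B$ and setting $\Ycal' = \bigvee_{n \in \Z} T^n \al' = \Ycal_0 \vee \bigvee_n T^n \al_B$. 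Conditional independence then yields $\E(f_i \mid \Ycal') \approx \E(f_i \mid \Ycal_0)$, so the projection onto $\Ycal'$ remains inside the prescribed SOT-neighborhood of $P_0$; meanwhile the Bernoulli ingredient $\bigvee_n T^n \al_B$ should supply the randomness needed to make $\Xb \to \Ycal'$ a relatively weakly mixing extension, by coupling it fibrewise over $\Scal_\del$ with the weak mixing provided by $B_\del$.

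The main obstacle is verifying relative weak mixing of $\Xb \to \Ycal'$, i.e.\ ergodicity of $\Xb \underset{\Ycal'}\times \Xb$. When $\Ycal_0 \subseteq \Scal_\del$, one gets a clean splitting $\Ycal' = \Ycal_0 \otimes \bigvee_n T^n \al_B$ and the relative product factors as $(B_\del \underset{\bigvee_n T^n \al_B}\times B_\del) \times (S_\del \underset{\Ycal_0}\times S_\del)$, whose ergodicity reduces to the single fact that $B_\del \to \bigvee_n T^n \al_B$ is RWM inside $B_\del$, which holds for a suitable $\al_B$ because $B_\del$ is Bernoulli. For general $\Ycal_0$ straddling the two components of the Pinsker decomposition, $\Ycal_0$ cannot be hidden inside $\Scal_\del$, and one must argue instead that any relatively distal tower above the enlarged $\Ycal'$ would be forced to absorb Bernoulli-measurable information coming from $\al_B$, which is precluded by the $K$-property of $B_\del$; hence $(\Ycal')_{rd} = \Ycal'$ and the extension is RWM. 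Positivity of $h(T)$ is essential here, since it is what makes the nontrivial Bernoulli component $B_\del$ available, while finiteness of $h(T)$ ensures the weak Pinsker decomposition exists in the required quantitative form.
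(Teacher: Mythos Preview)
Your reduction to density and the $G_\delta$ part via Proposition~\ref{Gdelta} are fine, and the use of Austin's theorem to access a Bernoulli direct summand is the right ingredient. The gap is in the verification of relative weak mixing for the enlarged factor $\Ycal'=\Ycal_0\vee\bigvee_nT^n\al_B$. Already in your ``easy'' case $\Ycal_0\subseteq\Scal_\del$ the factorisation
\[
\Xb\underset{\Ycal'}{\times}\Xb\;\cong\;\bigl(B_\del\underset{(\al_B)_T}{\times}B_\del\bigr)\times\bigl(S_\del\underset{\Ycal_0}{\times}S_\del\bigr)
\]
does \emph{not} reduce ergodicity to the single fact that $B_\del\to(\al_B)_T$ is RWM: you also need $S_\del\underset{\Ycal_0}{\times}S_\del$ to be ergodic, i.e.\ that $S_\del\to\Ycal_0$ is itself RWM, and nothing in the construction provides this. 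Concretely, take $T=B\times R_\al$ with $B$ Bernoulli and $R_\al$ an irrational rotation, and let $\Ycal_0$ be the trivial factor; then $S_\del$ contains the rotation and $S_\del\times S_\del$ is not ergodic, so your $\Ycal'=(\al_B)_T$ is not an RWM factor. The heuristic in your final paragraph (``a distal tower over $\Ycal'$ would be forced to absorb $B_\del$-information, contradicting the K-property'') does not survive this example either: the distal information sitting over $\Ycal_0$ lives entirely in $S_\del$ and is untouched by adjoining $(\al_B)_T$.

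The paper's route is different and sidesteps this issue. Rather than \emph{enlarging} $\Ycal_0$, it \emph{perturbs} a generator $\Qcal$ of $\Ycal_0$ using Thouvenot's splitting lemma \cite[Lemma~7]{Th} (now available for every $T$ by \cite{Au}): one obtains $\bar\Qcal$ with $|\bar\Qcal-\Qcal|<\ep$ such that $\Xb=(\bar\Qcal)_T\times(\bar\Bcal)_T$ with $(\bar\Bcal)_T$ a nontrivial Bernoulli complement. The extension $\Xb\to(\bar\Qcal)_T$ is then RWM for the trivial reason that it is a direct product with a weakly mixing system, and closeness of $\bar\Qcal$ to $\Qcal$ gives the SOT approximation of projections. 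The point is that Thouvenot's lemma produces a factor that \emph{splits off}, so the complementary direction is entirely Bernoulli; your construction leaves the $S_\del$-direction over $\Ycal_0$ unchanged, and whatever distal structure lives there persists over $\Ycal'$.
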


\begin{proof}
This is just the set $\Wb_T$ above and hence, by Proposition \ref{Gdelta}, it is a $G_\delta$ set.

To see that it is dense we use the fact that is proved by Thouvenot for a $T$ with the weak-Pinsker property
\cite[Lemma 7]{Th}. 
Namely, given $\ep >0$
one can perturb by no more than $\ep$ any partition $\Qcal$ to a partition $\bar{Q}$
so that the the new partition splits off with a Bernoulli complement $\bar{\Bcal}$:
\begin{enumerate}
\item
$|\bar{\Qcal} - \Qcal | < \ep$,
\item
$(\bar{\Qcal})_T \perp (\bar{\Bcal})_T$,
\item
$(\bar{\Qcal})_T \vee (\bar{\Bcal})_T = \Xcal$,
\item
The partitions $T^i \bar{\Bcal}, \ i \in \Z$, are independent.
\end{enumerate}

Now given a projection $P \in  \Pb_T$ that one wants to approximate, we find a generator $\Qcal$ 
for the factor determined by $P$ and apply Thouvenot's lemma.

However, to make sure that the Bernoulli part of Thouvenot's splitting is not trivial we first modify 
the generator $\Qcal$ by a little bit so that the entropy of the new $\Qcal$
is strictly less than the entropy of $T$.
Then we choose the $\epsilon$ in the conclusion of \cite[Lemma 7]{Th} so small that
the factor defined by $\bar{\Qcal}$ still
has less than full entropy. Now we are sure that the Bernoulli complement is non-trivial. 
Finally, by Austin's recent result \cite{Au}, the weak Pinsker property always holds and our proof is complete.
\end{proof}

\begin{prob}
Can one prove an analogous claim for relative Bernoulli extensions ?
\end{prob}

\br

\section{A generic extension does not add entropy }\label{add-0}

In the last few sections of the paper we show that for a fixed ergodic $T$ with property $\Ab$, 
a generic extension $\hat{T}$ of $T$ also has the property $\Ab$.
Here $\Ab$ stands for each of the following properties:
(i) having the same entropy as $T$, (ii) Bernoulli, (iii)  K, and (iv) loosely Bernoulli.

\begin{thm}\label{+0}
For any fixed ergodic transformation $T $, the generic extension does not increase entropy.
\end{thm}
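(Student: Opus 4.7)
The plan is to reduce the theorem to a comeagerness statement about the relative entropy of the extension, and then to split the proof into a $G_\del$ part and a density part, with density being the main obstacle.

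First I would parametrize the ergodic extensions of $T$ as follows. Fix a nonatomic standard Lebesgue space $(Z,\Zcal,\eta)$ and consider the closed, hence Polish, subset
$$
\Ecal(T) := \{\hat T \in \Aut(X \times Z, \mu \times \eta) : \pi \circ \hat T = T \circ \pi\}
$$
of $\Aut(\mu \times \eta)$, where $\pi: X \times Z \to X$ is the coordinate projection. The subset $\Ecal_e(T) \subset \Ecal(T)$ of ergodic elements is $G_\del$, and by Rokhlin's theorem every $\hat T \in \Ecal(T)$ admits a skew-product representation $\hat T(x,z) = (Tx, S_x z)$ for a measurable cocycle $S: X \to \Aut(Z,\eta)$. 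By the Abramov--Rokhlin formula $h(\hat T) = h(T) + h(\hat T \mid T)$, so the theorem is equivalent to showing that
$$
\Gcal := \{\hat T \in \Ecal_e(T) : h(\hat T \mid T) = 0\}
$$
is comeager in $\Ecal_e(T)$.

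For the $G_\del$ step I would exploit the classical fact that for each fixed finite measurable partition $Q$ of $X \times Z$ the map $\hat T \mapsto h(\hat T, Q \mid \pi^{-1}\Xcal)$ is upper semicontinuous on $\Ecal(T)$ (this comes from the monotone-limit definition of entropy together with continuity of $(\hat T, Q) \mapsto H(\bigvee_{i=0}^{n-1} \hat T^{-i}Q \mid \pi^{-1}\Xcal)$ for each fixed $n$). Choosing a countable family $\{Q_n\}_{n \ge 1}$ of finite partitions whose induced $\sig$-algebras generate $\Xcal \otimes \Zcal$ yields the Kolmogorov--Sinai type identity $h(\hat T \mid T) = \sup_n h(\hat T, Q_n \mid \pi^{-1}\Xcal)$, and hence
$$
\Gcal = \bigcap_{n, k \ge 1} \{\hat T \in \Ecal_e(T) : h(\hat T, Q_n \mid \pi^{-1}\Xcal) < 1/k\}
$$
exhibits $\Gcal$ as a $G_\del$ set.

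For density I would use a relative Halmos conjugacy argument. Pick $R \in \Aut(Z,\eta)$ weakly mixing and of zero entropy (such $R$ form a dense $G_\del$ in $\Aut(Z,\eta)$). Then $\hat T_\ast := T \times R$ is an ergodic element of $\Ecal(T)$ (ergodic since $T$ is ergodic and $R$ weakly mixing), with $h(\hat T_\ast) = h(T) + h(R) = h(T)$, so $\hat T_\ast \in \Gcal$. The Polish group $\mathbf G := \{\Phi \in \Aut(\mu \times \eta) : \pi \circ \Phi = \pi\}$ of fiber-preserving automorphisms acts on $\Ecal(T)$ by conjugation $\Phi \cdot \hat T := \Phi \hat T \Phi^{-1}$, preserving both ergodicity and relative entropy, hence preserving $\Gcal$. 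By the relative version of Halmos' conjugacy lemma \cite[Proposition 2.3]{G-W} already invoked in Section \ref{pre}, the $\mathbf G$-orbit of the aperiodic element $\hat T_\ast$ is dense in $\Ecal_e(T)$, and since it is contained in $\Gcal$, $\Gcal$ is dense; combined with the $G_\del$ property this yields comeagerness via Baire. The main obstacle I expect is verifying the applicability of the relative Halmos conjugacy lemma in exactly the Polish setup above; if that requires extra care, a backup is to establish density directly through a Rokhlin-tower cocycle approximation, replacing $S_0$ on the top level of a tall tower of height $N$ by a compensating element so that $S^N_x = \Id$ on the base, producing extensions whose $N$-th iterate is isomorphic to $T^N \times \Id_Z$ off a set of small measure and hence have vanishing relative entropy.
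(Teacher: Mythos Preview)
Your proof is correct and follows the same two-step strategy as the paper: exhibit the zero-added-entropy extensions as a $G_\del$ set via upper semicontinuity of entropy over a refining generating sequence of partitions, and obtain density from the relative Halmos conjugacy lemma \cite[Proposition 2.3]{G-W}. The paper's only differences are cosmetic: it works with the absolute entropy $h(\Pcal_n,\hat S)$ against a fixed finite generator $\Rcal$ of $T$ (reducing first to the finite-entropy case) rather than with the conditional entropy $h(\hat T, Q_n \mid \pi^{-1}\Xcal)$ via Abramov--Rokhlin, and it uses a constant irrational-rotation cocycle as the nonempty witness rather than $T\times R$ with $R$ weakly mixing of zero entropy.
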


\begin{proof}
Let $\Xb = (X, \mathcal{X},\mu,T)$ be an ergodic system with finite entropy,
which for convenience we assume it equals $1$. Let $\Rcal \subset \Xcal$ be a finite 
generating partition with entropy $1$.
Let $\Scal$ be the collection of Rokhlin cocycles with values in MPT$(I, \la)$, where 
$\la$ is the normalized Lebesgue measure on the unit interval $I =[0,1]$.
Thus an element $S \in \Scal$ is a measurable map $x \mapsto S_x \in $ MPT$(I, \la)$,
and we associate to it the {\em skew product transformation}
$$
\hat{S}(x,u) = (Tx, S_x u),\quad  (x \in X, u \in I).
$$ 
Let $Y = X \times I$ and set $\Yb = (Y, \mathcal{Y}, \mu \times \la)$, with $\Ycal = \Xcal \otimes \Bcal$.

We recall that, by Rokhlin's theorem,  every ergodic extension $\Yb \to \Xb$ either has this form
or it is $n$ to $1$ a.e for some $n \in \N$ (see e.g. \cite[Theorem 3.18]{G}).
Thus the collection $\Scal$ parametrises the ergodic extensions of $\Xb$ with infinite fibers.
This defines a Polish topology on $\Scal$ which is inherited from MPT$(X \times I, \mu \times \la)$.
Of course a finite to one extension does not add entropy and thus it suffices to show that 
for a dense $G_\del$ subset $\Scal_0 \subset \Scal$ we have
$h(\hat{S}) = 1$ for every $S \in \Scal_0$.

For each $n \in \N$ let $\Qcal_n$ denote the dyadic partition of $[0,1]$
into intervals of size $1/2^n$, and let 
$$
\Pcal_n = \Rcal \times \Qcal_n.
$$
Clearly, for any $S \in \Scal$ we have $h(\Pcal_n,\hat{S}) \geq 1$.

\begin{lem}
For any $\ep >0$, the set
$$
U(n, \ep) =
\{S \in \Scal : h(\Pcal_n,\hat{S}) < 1 + \ep\},
$$
is open.
\end{lem}

\begin{proof}
Let $S_0 \in U(n, \ep)$. Then, there exists $N$ such that
$$
\frac{H(\vee_{0}^{N-1} S_0^{-i} \Pcal_n)}{N} = 1+a,
$$
with $0 \leq a < \ep$.
If $\del = \frac{\ep - a}{2}$ and $S$
is sufficienltly close to $S_0$, then
$$
\left| \frac{H(\vee_{0}^{N-1} S^{-i} \Pcal_n)}{N} - 
\frac{H(\vee_{0}^{N-1} S_0^{-i} \Pcal_n)}{N} \right| < \del,
$$
so that $S$ will be in $U(n,\ep)$.
\end{proof}

It follows from the lemma that
$$
\Scal_0 = \bigcap_{n \in \N} \bigcap_{k \in \N} U(n, 1/k),
$$
is a $G_\del$ subset of $\Scal$ consisting of those $S \in \Scal$ such that $h(\hat{S}) =1$.
Clearly $\Scal_0$ is nonempty; e.g. we can take $S$ to be the constant cocycles with a fixed
value $R_\al$, an irrational rotation on the circle.
Thus by the relative Halmos theorem  \cite[Proposition 2.3]{G-W}, it follows that $\Scal_0$ is a dense 
$G_\del$ subset of $\Scal$, as claimed. 
\end{proof}

\br

Using the classical Kuratowski-Ulam theorem we can ``lift" this theorem to the collection of ergodic 
transformations which commute with a fixed $P \in \Qb$  (see Section \ref{RWMg}).

As usual let $(X, \Xcal, \mu)$ be a standard atomless Lebesgue space and let
$\Ycal \subset \Xcal$ be a $\sig$-algebra such that $\nu = \mu \rest \Ycal$ is atomless
and such that 
$$
(X, \Xcal, \mu) = (Y \times Z, \Ycal \otimes \Zcal, \nu \times \eta),
$$
where $(Z, \Zcal, \eta)$ is a standard Lebesgue space with $\eta$ atomless.
Let $P \in \Pb$ be the  projection of $L_2(\mu)$ on $L_2(\Ycal)$.

Set
\begin{align*}
\mathcal{N} &= \{T \in \Aut(\mu) : T \ {\text{leaves the $\sig$-algebra $\mathcal{Y}$ invariant}}\}\\
&=  \{T \in \Aut(\mu) : TP = PT\},
\end{align*}
and let
$$
\Ncal_0  = \{T \in \mathcal{N} : T \ {\text{is ergodic and does not add entropy to}}\
T \rest \mathcal{Y}\}.
$$
As the collection of ergodic transformations is a dense $G_\del$ subset of $\Aut(\nu)$,
Theorem \ref{+0}, combined with the Kuratowski-Ulam theorem 
\cite[Theorem 15.4]{Ox-80}, immediately yield
the following result:

\begin{cor}
The collection $\Ncal_0$ forms a residual subset of $\mathcal{N}$.
\end{cor}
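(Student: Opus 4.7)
The plan is to identify $\mathcal{N}$ with the Polish product $\Aut(\nu) \times \Scal$, where $\Scal$ is the space of Rokhlin cocycles $\sigma : Y \to \Aut(Z,\eta)$ introduced in the proof of Theorem \ref{+0}. Since every $T \in \mathcal{N}$ preserves the fibration $X = Y \times Z \to Y$, a disintegration argument writes $T$ in the form $T(y,z) = (Sy, \sigma_y z)$ with $S = T \rest \mathcal{Y} \in \Aut(\nu)$ and $\sigma \in \Scal$, and the assignment $T \leftrightarrow (S, \sigma)$ is a homeomorphism for the natural Polish topologies on each side. Finite-to-one extensions of $S$ do not fit this description as genuine skew products, but they form a meager subset of every non-trivial fibre and hence can be ignored.

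Next, I would verify the hypotheses of the Kuratowski--Ulam theorem \cite[Theorem 15.4]{Ox-80} for $\Ncal_0$ viewed inside $\Aut(\nu) \times \Scal$. First, $\Ncal_0$ is Borel, hence has the property of Baire: ergodicity of $T$ is a $G_\del$ condition, and the condition $h(T) = h(T \rest \mathcal{Y})$ is Borel since $T \mapsto h(T)$ is a Borel function. Second, by Halmos's theorem the set $E \subset \Aut(\nu)$ of ergodic $S$ is a dense $G_\del$. For each $S \in E$ with $h(S) < \infty$, Theorem \ref{+0} applied with the ergodic base $(Y,\nu,S)$ in place of $(X,\mu,T)$ supplies a dense $G_\del$ in $\Scal$ of cocycles $\sigma$ whose skew product $\hat T_{S,\sigma}$ is ergodic and satisfies $h(\hat T_{S,\sigma}) = h(S)$; when $h(S) = \infty$ the inequality $h(\hat T) \geq h(S)$ forces equality automatically, and the relative Halmos lemma \cite[Proposition 2.3]{G-W} shows the ergodic extensions still form a dense $G_\del$ fibre. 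In either case the $S$-fibre of $\Ncal_0$ is comeager in $\Scal$.

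Putting these pieces together, Kuratowski--Ulam will yield that $\Ncal_0$ is comeager in $\Aut(\nu) \times \Scal \cong \mathcal{N}$. The step that I expect to demand the most care is the topological identification $\mathcal{N} \cong \Aut(\nu) \times \Scal$, including checking that the joint topology on cocycle and base agrees with the subspace topology inherited from $\Aut(\mu)$; once this is granted, Theorem \ref{+0} supplies the fibrewise genericity and the Kuratowski--Ulam theorem integrates it to genericity over $\mathcal{N}$.
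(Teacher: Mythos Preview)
Your proposal is correct and follows essentially the same approach as the paper, which tersely invokes the Kuratowski--Ulam theorem together with Theorem \ref{+0} and the fact that the ergodic transformations form a dense $G_\del$ in $\Aut(\nu)$; you have simply fleshed out the details the paper leaves implicit. One small point: in the paper's setup the fibre space $(Z,\eta)$ is already assumed atomless, so every $T\in\mathcal{N}$ is automatically a skew product over $Y$ with continuous fibres---there are no finite-to-one extensions inside $\mathcal{N}$ to worry about, and that remark can be dropped.
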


\br

\section{A generic extension of a Bernoulli system is Bernoulli }\label{bernoulli}

\begin{thm}
For any fixed Bernoulli transformation $T$ of finite entropy the generic extension is Bernoulli.
\end{thm}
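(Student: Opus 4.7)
My plan is to follow the template of Theorem \ref{+0} and prove that the set $\Bcal = \{S \in \Scal : \hat{S} \ \text{is Bernoulli}\}$ is both dense and $G_\del$ in the Polish space $\Scal$ of Rokhlin cocycles over the fixed Bernoulli base $T$.

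For density, I would fix any Bernoulli $R \in \Aut(I,\la)$ of positive entropy and take the constant cocycle $S^0$ defined by $S^0_x \equiv R$ for $\mu$-a.e. $x$. The corresponding skew product is the direct product $\hat{S}_0 = T \times R$, which is Bernoulli by Ornstein's theorem that a direct product of Bernoulli systems is Bernoulli. By the relative Halmos theorem \cite[Proposition 2.3]{G-W} (already invoked in the proof of Theorem \ref{+0}), the conjugates of $\hat{S}_0$ by fibrewise automorphisms $g(x,u)=(x,g_x u)$ form a dense subset of $\Scal_e$. Since such a conjugation is a measure-preserving isomorphism of $(X \times I, \mu \times \la)$, each conjugate is itself Bernoulli, and therefore $\Bcal$ meets the dense $G_\del$ set $\Scal_e$ densely; hence $\Bcal$ is dense in $\Scal$. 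Note that the hypothesis that $T$ is Bernoulli is genuinely used here, since $T$ is a factor of any $\hat S$ and a factor of a Bernoulli is Bernoulli.

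For the $G_\del$ structure, I would characterize the Bernoulli property of $\hat{S}$ through the very weak Bernoulli (VWB) criterion of Ornstein. Fix a finite generator $\Rcal$ of $T$ and a refining sequence of finite partitions $\Qcal_k$ of $I$ generating the Borel $\sig$-algebra of $I$, and set $\hat{\Pcal}_k = \Rcal \times \Qcal_k$. For ergodic $\hat{S}$ these partitions eventually generate $\Xcal \otimes \Bcal(I)$, and by Ornstein's isomorphism theorem $\hat{S}$ is Bernoulli iff for all sufficiently large $k$ the process $(\hat{S},\hat{\Pcal}_k)$ is VWB. For each fixed $k,n,\ep$, the set of $S \in \Scal$ such that a $(1-\ep)$-fraction of the atoms of $\bigvee_{j=-n}^{-1}\hat{S}^j \hat{\Pcal}_k$ have conditional $n$-step process distributions within $\bar d$-distance $\ep$ of the unconditional distribution is open, since the finite-block distributions of $(\hat{S},\hat{\Pcal}_k)$ depend continuously on $S$ in the Polish topology of $\Scal$ and $\bar d$-distances between such distributions are continuous. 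Intersecting suitably over $k$, $n$, and $\ep \to 0$ displays $\Bcal$ as a $G_\del$ set.

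The main technical obstacle is the second step. One must verify that the nested quantifier structure of VWB genuinely collapses to a $G_\del$ intersection, without leaving stray $F_\sig$ quantifiers on the partition scale, and that the candidate family $\{\hat{\Pcal}_k\}$ is strong enough in the sense that whenever $\hat{S}$ fails to be Bernoulli some $\hat{\Pcal}_k$ detects the failure via VWB. In effect this amounts to transcribing Ornstein's proof of the Isomorphism Theorem into descriptive-set-theoretic form within the Polish space $\Scal$, with careful attention to uniform continuity of $\bar d$ with respect to the metric on $\Scal$. Once density and $G_\del$-ness are secured, $\Bcal$ is comeager; combined with Theorem \ref{+0} the generic $\hat{S}$ is Bernoulli of entropy $h(T)$, and hence isomorphic to $T$ itself by Ornstein's isomorphism theorem.
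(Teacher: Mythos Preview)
Your density argument via the direct product $T \times R$ is correct and a bit simpler than the paper's, which instead invokes Rudolph's theorem that a weakly mixing group extension of a Bernoulli shift is Bernoulli (or the explicit Adler--Shields example). Note, however, that because the paper carries out its $G_\del$ argument inside the entropy-preserving class $\Scal_0$ of Theorem~\ref{+0}, its witness must have entropy equal to $h(T)$; your product with a positive-entropy $R$ would not serve there.

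The genuine gap is exactly the one you flag but do not close. The VWB condition reads $\forall\ep\,\exists N\,\forall k\,[\ldots]$, and the inner universal quantifier over the past length $k$ does not evaporate by ``intersecting suitably''. Your proposed open sets, with a single parameter $n$ playing both past and future length, do not obviously cut out the VWB processes, and the appeal to ``transcribing Ornstein's proof'' is not an argument. The paper's resolution is concrete and different from what you sketch: it works inside $\Scal_0$, so that the entropy of each process $(\hat S,\Pcal_n)$ is known to equal $h(T)=1$, and defines open sets $U(n,N_1,N_2,\ep,\del)$ carrying \emph{two} conditions --- the $\bar d_{N_1}$ estimate at past length $N_2$, together with an entropy bound
\[
H\Bigl(\bigvee_{i=0}^{N_1-1}\hat S^{-i}\Pcal_n \,\Big|\, \bigvee_{i=-N_2}^{-1}\hat S^{-i}\Pcal_n\Bigr) < N_1 + \del.
\]
Since the left side is always at least $N_1$ on $\Scal_0$, this bound forces, via the standard link between conditional entropy and $\ep$-independence, that $\bigvee_{i=0}^{N_1-1}\hat S^{-i}\Pcal_n$ conditioned on the $N_2$-past is nearly independent of the further past $\bigvee_{i=-k}^{-N_2-1}\hat S^{-i}\Pcal_n$ for every $k>N_2$. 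Hence the $\bar d$ estimate at level $N_2$ propagates automatically to all larger $k$, and this is precisely the missing idea that collapses the universal quantifier over $k$ and yields a genuine $G_\del$.
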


\begin{proof}
We first recall the following:

\begin{defn}
A finite measurable partition $\Pcal$ of $X$ is called {\em very weak Bernoulli} (VWB for short) if
for every $\ep>0$, there is a positive integer $N$ such that for all $k \geq 1$, there is a
collection $\Gcal_k$ of atoms of the partition $\bigvee_{i = -k}^{-1} T^{-i}\Pcal$ such that
\begin{enumerate}
\item
$\mu(\bigcup \{A \in \Gcal_k\}) > 1 -\ep$.
\item
$\bar{d}_N\left(\vee_{i = 0}^{N -1} T^{-i}\Pcal, \vee_{i = 0}^{N -1} T^{-i}\Pcal \rest A \right) < \ep$,
for every atom $A$ of $\Gcal_k$.
\end{enumerate}
Here $\bar{d}_N$ is the normalized Hamming distance of distributions, 
and $\bigvee_{i = 0}^{N -1} T^{-i}\Pcal \rest A$ is the 
conditional distribution of $\bigvee_{i = 0}^{N -1} T^{-i}\Pcal$
restricted to the atom $A$ of the partition $\bigvee_{i = -k}^{-1} T^{-i}\Pcal$.
\end{defn}

As the inverse limit of Bernoulli systems is Bernoulli,
to show that a transformation $T$ on $(X, \Xcal, \mu)$ is 
Bernoulli it suffices to show that for a refining sequence of partitions
$$
\Pcal_1 \prec \cdots \prec \Pcal_n \prec \Pcal_{n+1} \prec \cdots
$$
such that the corresponding algebras $\hat{\Pcal}_n$ satisfy
$\bigvee_{n \in \N} \hat{\Pcal}_n = \Xcal$, for each $n$, the process $(T, \Pcal_n)$ is VWB.

With no loss of generality we assume that the entropy of the Bernoulli transformation $T$ is $1$.
We keep the notations of the previous section (Section \ref{add-0}).
and we will show that the generic element of the space $\Scal_0$ is Bernoulli.
By Theorem \ref{+0} for every $S \in \Scal$ the corresponding extension $\hat{S}$ of $T$ has 
entropy $1$.
We want to show that for a generic set of $S \in \Scal_0$, the corresponding $\hat{S}$ is Bernoulli.
To do this we need to express the fact that the partition $\Pcal_n$ is VWB in a ``finite way". 

The problem is that in the definition of the VWB property the inequality 
$$
\bar{d}_N\left(\vee_{i = 0}^{N -1} T^{-i}\Pcal_n, \vee_{i = 0}^{N -1} T^{-i}\Pcal_n \rest A \right) < \ep,
$$
for $A \in \Gcal_k$, has to hold for all large $k$.
The key to being able to express this with one value  of $k_0$ lies in the fact that we know that the entropy
of $\Pcal_n$ with respect to $\hat{S}$ for $S \in \Scal_0$ equals 1.
This is done as follows.

\begin{defn}
For two labeled partitions
$\Pcal = \{P_1,\dots, P_a\}$ ,
$\Qcal = \{Q_1, \dots, Q_b\}$ of $(X, \mu)$ we say that $\Pcal$ is $\ep$-independent of $\Qcal$
if there is a set of indices $J \subset \{1, \dots, b\}$ such that
\begin{enumerate}
\item
$\sum_{j \in J} \mu(Q_j) > 1 - \ep$. 
\item
$\sum_{i=1}^a | \mu(P_i | Q_j) - \mu(P_i) | < \ep$ for each $j \in J$.
\end{enumerate}
\end{defn}

One can use entropy to express independence since $H(\Pcal |\Qcal) = H(\Pcal)$ if and only if
$\Pcal$ is independent of $\Qcal$. If the number of elements of $\Pcal$ is fixed then the following is also 
well known.
Given $\ep >0$ there is a $\del > 0$ such that if
$H(\Pcal) < H(\Pcal |\Qcal) + \del$, then $\Pcal$ is $\ep$-independent of $\Qcal$.
The conditional version of this also holds. Namely,
if $\Rcal$ is a third partition, given $\ep >0$ there is a $\del > 0$ such that if
\begin{equation}\label{HPRQ}
H(\Pcal | \Rcal) < H(\Pcal |\Rcal \vee \Qcal) + \del,
\end{equation}
then $\Pcal$ conditioned on $\Rcal$ is $\ep$-independent of $\Qcal$.
That is, there is a set of atoms $A$ of $\Rcal$ whose union has total measure $> 1- \ep$, such that
conditioned on $A$, $\Pcal$ is $\ep$-inependent of $\Qcal$.

Here is another general fact. If the entropy of a partition $\Pcal$ in $(X, \Xcal, \mu, T)$ equals $h$,
then for all $N$,
\begin{equation}\label{Nh}
H( \vee_{i=0}^{N-1} T^{-i} \Pcal \, |  \vee_{i=- \infty}^{-1} T^{-i} \Pcal) =  Nh,
\end{equation}
hence
$$
Nh \leq H( \vee_{i=0}^{N-1} T^{-i} \Pcal \, |  \vee_{i=- k}^{-1} T^{-i} \Pcal) 
= H( \vee_{i=0}^{N-1} T^{-i} \Pcal \, |  (\vee_{i=- k_0}^{-1} T^{-i} \Pcal) \vee (\vee_{i=-k }^{-k_0 -1} T^{-i} \Pcal)),  
$$
for all $k  > k_0 \geq 1$.

Now in the inequality (\ref{HPRQ}) we let
$\Pcal = \vee_{i=0}^{N-1} T^{-i} \Pcal, \ \Rcal = \vee_{i=- k_0}^{-1} T^{-i} \Pcal$
and $\Qcal =   \vee_{i=-k }^{-k_0 -1} T^{-i} \Pcal$.
It follows that, given $\ep > 0$, there is a $\del >0$ such that, if $k_0$ is sufficiently large so that
$$
H( \vee_{i=0}^{N-1} T^{-i} \Pcal \, |  \vee_{i=- k_0}^{-1} T^{-i} \Pcal) < Nh +\del,
$$
then, by (\ref{Nh}),
$$
H( \vee_{i=0}^{N-1} T^{-i} \Pcal \, |  \vee_{i=- k_0}^{-1} T^{-i} \Pcal) < 
H( \vee_{i=0}^{N-1} T^{-i} \Pcal \, |  (\vee_{i=- k_0}^{-1} T^{-i} \Pcal) \vee (\vee_{i=-k }^{-k_0 -1} T^{-i} \Pcal)) +\del,
$$
hence, by (\ref{HPRQ}),
$\vee_{i=0}^{N-1} T^{-i} \Pcal$ conditioned on $ \vee_{i=- k_0}^{-1} T^{-i} \Pcal$,
is $\ep$-independent of $\vee_{i=-k }^{-k_0 -1} T^{-i} \Pcal$, for all $k > k_0$.

With this background we come to the main step of the proof.
Define the set $U(n, N_1, N_2, \ep, \del)$ to consist of those $S \in \Scal_0$ that satisfy:
\begin{enumerate}
\item
$H(\bigvee_{i=0}^{N_1 -1} \hat{S}^{-i} \Pcal_n \, | \bigvee_{i=-N_2}^{-1} \hat{S}^{-i} \Pcal_n ) < N_1 + \del$,
\item
$\bar{d}_{N_1} \left(\bigvee_{i=0}^{N_1 -1} \hat{S}^{-i} \Pcal_n,  
\bigvee_{i=0}^{N_1 -1} \hat{S}^{-i} \Pcal_n \rest A \right)  < \ep$, 
for a set $\Gcal_{N_2}$ of atoms $A \in  \bigvee_{i=-N_2}^{-1} \hat{S}^{-i} \Pcal_n $,
such that $(\mu \times \la)\left(\bigcup \{A : A \in \Gcal_{N_2}\} \right) > 1 - \ep$.
\end{enumerate}
We claim that the sets $U(n, N_1, N_2, \ep, \del)$ are open (easy to check) and that the $G_\del$ set
$$
\Scal_1 = \bigcap_{n, k, l} \bigcup_{N_1, N_2} U(n, N_1, N_2, 1/k, 1/l)
$$
comprises exactly the elements $S \in \Scal_0$ for which the corresponding $\hat{S}$ is Bernoulli. 
Thus, if $S \in \Scal_0$ is such that $\hat{S}$ is Bernoulli, then for every $n, \ep, \del$, there are
$N_1, N_2$ such that $S \in U(n, N_1, N_2, \ep, \del)$, and conversely, every for Bernoulli $\hat{S}$ 
the corresponding $S$ is in $\Scal_1$.

Note that (1) holds for any $N_1$ and $\del$ if only $N_2$ is sufficiently large, because
for $S \in \Scal_0$
$$
H(\vee_{i=0}^{N_1 -1} \hat{S}^{-i} \Pcal_n \, | \vee_{i=-\infty}^{-1} \hat{S}^{-i} \Pcal_n ) = N_1.
$$

Finally the collection $\Scal_1$ is nonempty; e.g, by a deep result of Rudolph \cite{Ru-79, Ru-85},
every weakly mixing group extension of $T$ is in $\Scal_1$. In fact an explicit example
of such an extension of the $2$-shift is given by Adler and Shields, \cite{A-S}.

Again we now apply the relative Halmos theorem  \cite[Proposition 2.3]{G-W}, to deduce that 
$\Scal_1$ is a dense $G_\del$ subset of $\Scal$, as claimed. 
\end{proof}

\br

\section{A generic extension of a K system is K}\label{K-Sec}

\begin{thm}\label{Kthm}
For any fixed K transformation $T$, the generic extension is K.
\end{thm}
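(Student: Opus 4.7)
The plan is to mirror the setup used in Sections \ref{add-0} and \ref{bernoulli}: parametrise the ergodic extensions of $T$ having non-trivial fibres by the Polish space $\Scal$ of Rokhlin cocycles $S : X \to \Aut(I,\la)$, with associated skew product $\hat{S}(x,u) = (Tx, S_x u)$ on $(Y, \nu) = (X \times I, \mu \times \la)$. Since by Rokhlin's theorem the only other ergodic extensions are finite-to-one, and these preserve the K property, it is enough to exhibit a dense $G_\del$ subset $\Scal^K \subset \Scal$ for which $\hat{S}$ is K. I would fix once and for all a refining sequence of finite measurable partitions $\Pcal_n$ of $Y$ whose join generates $\Ycal$ (e.g. $\Pcal_n = \Rcal \times \Qcal_n$ with $\Rcal$ a generator of $T$, extended by a countable generating sequence if $T$ has infinite entropy).

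The key step is to express the K property as a $G_\del$ condition on $S$. The system $\hat{S}$ is K iff the tail $\sig$-algebra $\bigcap_{N} \bigvee_{i \geq N} \hat{S}^{-i}\Pcal_n$ is trivial for every $n$, and by monotonicity of conditional entropy in the conditioning algebra together with the martingale convergence theorem, this is equivalent to the condition that for every $n$ and $k$ there exists $N$ such that for all $M \geq N$,
$$ H\!\left(\Pcal_n \,\Big|\, \bigvee_{i=N}^{M} \hat{S}^{-i}\Pcal_n\right) > H(\Pcal_n) - 1/k. $$
Set $U(n,N,M,k) = \{S \in \Scal : H(\Pcal_n \mid \bigvee_{i=N}^M \hat{S}^{-i}\Pcal_n) > H(\Pcal_n) - 1/k\}$. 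Since conditional entropy of a finite partition depends continuously on $S$ in the Polish topology of $\Scal$, each $U(n,N,M,k)$ is open, and
$$ \Scal^K \;=\; \bigcap_{n}\bigcap_{k}\bigcup_{N}\bigcap_{M \geq N} U(n,N,M,k) $$
is a $G_\del$ subset of $\Scal$ whose elements are precisely the $S$ for which $\hat{S}$ is K.

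To see $\Scal^K \neq \emptyset$, take $S$ to be the constant cocycle $S_x \equiv B$ for some Bernoulli shift $B \in \Aut(I,\la)$; the resulting skew product is the direct product $T \times B$, which is a product of two K systems and hence K. Finally, $\Scal^K$ is invariant under the group of fibre-preserving conjugations commuting with the base (K is an isomorphism invariant), so the relative Halmos conjugacy lemma \cite[Proposition 2.3]{G-W} applied to any $S \in \Scal^K$ yields a dense orbit inside $\Scal^K$, proving that $\Scal^K$ is a dense $G_\del$ subset of $\Scal$, as required. The \textbf{main obstacle} is to secure step 1 rigorously: one has to verify that the tail-triviality of each $\Pcal_n$ really is equivalent to the above finite-$M$ conditional-entropy inequalities (which is the point where Austin's earlier-noted inaccuracy presumably entered), and that the generating sequence $\{\Pcal_n\}$ is flexible enough to handle the possibly-infinite-entropy case.
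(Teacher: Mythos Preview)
Your proposal has a genuine gap, and it lies precisely where you flag an obstacle --- but not for the reason you give. The equivalence of tail-triviality of each $\Pcal_n$ with your conditional-entropy inequalities is routine; the real problem is that the set
\[
\Scal^K \;=\; \bigcap_{n}\bigcap_{k}\bigcup_{N}\bigcap_{M \geq N} U(n,N,M,k)
\]
is \emph{not} a $G_\del$ set. The inner $\bigcap_{M \geq N}$ of open sets is only $G_\del$, so the $\bigcup_N$ over it is a priori $G_{\del\sig}$, and the outer intersections push the complexity still higher. Monotonicity in $M$ (conditional entropy decreases as the conditioning algebra grows) lets you rewrite $\bigcap_{M \geq N} U(n,N,M,k)$ as a closed set, but $\bigcup_N$ of closed sets is still only $F_\sig$, and you cannot recover $G_\del$ from there. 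This is exactly the difficulty the paper's proof is designed to eliminate, and is very plausibly the ``inaccuracy'' that was corrected.

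The paper's remedy is to restrict first to the dense $G_\del$ set $\Scal_0$ of cocycles not adding entropy (Section~\ref{add-0}), so that $h(\hat S,\Pcal_n)$ is known. One then couples the K-inequality at a \emph{single} far-past window $[-k_1,-k_0]$ with a second finitary entropy bound
\[
H\Bigl(\bigvee_{i=-k_0}^{N-1}\hat S^{-i}\Pcal_n \,\Big|\, \bigvee_{i=-k_1}^{-1}\hat S^{-i}\Pcal_n\Bigr) < (N+k_0)h + \del,
\]
which, via the conditional version of ``entropy close to maximal implies $\ep$-independence'', forces $\bigvee_{0}^{N-1}\hat S^{-i}\Pcal_n$ conditioned on $\bigvee_{-k_0}^{-1}\hat S^{-i}\Pcal_n$ to be $\ep$-independent of the remoter past $\bigvee_{-k}^{-k_0-1}\hat S^{-i}\Pcal_n$ for \emph{all} $k>k_0$. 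Thus the universal quantifier ``for all $M\ge N$'' is traded for one extra open condition, and the K-set becomes $\bigcap_{n,l,m,N}\bigcup_{k_0,k_1}(\text{open})$, an honest $G_\del$.

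This fix has a knock-on effect on your non-emptiness witness: the direct product $T\times B$ with $B$ Bernoulli adds entropy and so does not lie in $\Scal_0$. The paper instead invokes Parry's theorem that a weakly mixing circle extension of a K-automorphism is K, together with Jones--Parry that the generic circle extension of a weakly mixing system is weakly mixing; circle extensions add no entropy and therefore live in $\Scal_0$.
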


\begin{proof}
%
%
The process $(T, \Pcal)$ is a K-process if it has a trivial tail, i.e.
$$
\bigcap_{k =1}^\infty \bigvee_{i= -\infty}^{-k} T^{-i}\Pcal
$$
is the trivial $\sig$-field. Thus this can be expressed by the property that  for all $N$ and $\ep >0$
there is a $k_0$  such that $\vee_{i=0}^{N-1} T^{-i}\Pcal$ is $\ep$-independent of
$\vee_{i=-\infty}^{-k} T^{-i}\Pcal$ for all $k \ge k_0$.
As we have seen one can express the $\ep$-independence in terms of entropy so that the
K-property can also be expressed by saying that for all $N$ and $\ep > 0$, there is a $k_0$ such that
for all $k  \geq k_0$
\begin{equation}\label{K}
H( \vee_{i=0}^{N-1} T^{-i} \Pcal \, |  \vee_{i= - k}^{-k_0} T^{-i} \Pcal)  >   H( \vee_{i=0}^{N-1} T^{-i} \Pcal) - \ep.
\end{equation}

To rid ourselves of the need for (\ref{K}) for all $k  \geq k_0$, notice that for all $n$ if $k_1$ is sufficiently large then
\begin{equation}\label{n}
H( \vee_{i=0}^{n-1} T^{-i} \Pcal \, |  \vee_{i= - k_1}^{-1} T^{-i} \Pcal)  <  nh +  \del = n + \del,
\end{equation}
where $h=1$ is the entropy of the process $(T, \Pcal)$. 
Now given $\ep$ if we take $k_0$ as above and for $n=N+k_0$ and $\del$ sufficiently small 
find $k_ 1$ so that (\ref{n}) holds then (\ref{K}) for $k=k_0 +k_1$ will
imply (\ref{K}) with $2\ep$ , instead of $\ep$ for all $k \geq k_1+k_1$.

Remark too that if a system is K for each member of a refining sequence of partitions then
it is K, as the inverse limit of K automorphisms is K.

So, 
keeping the notations of Section \ref{bernoulli}, we define $U(n, N, k_0, k_1, \ep, \del)$
to consist of those $S \in \Scal_0$ that satisfy
\begin{enumerate}
\item
$H(\vee_{-k_0}^{N-1}  \hat{S}^{-i} \Pcal_n \, | \vee_{i=-k_1}^{-1} \hat{S}^{-i} \Pcal_n) < N + k_0 + \del$
\item
$H( \vee_{i=0}^{N-1} \hat{S}^{-i} \Pcal_n \, |  \vee_{i= - k_1}^{-k_0}  \hat{S}^{-i} \Pcal_n)  
>  H( \vee_{i=0}^{N-1} \hat{S}^{-i} \Pcal_n) - \ep$.
\end{enumerate}
It is now easy to check that the set 
$U(n, N, k_0, k_1, \ep, \del)$ is open and that the $G_\del$ set
$$
\Scal_1 = \bigcap_{n,l,m,N} \bigcup_{k_0, k_1} U(n, N, k_0, k_1, 1/l, 1/m)
$$
consists of those $S \in \Scal_0$ for which $\hat{S}$ is K.

To see that $\Scal_1$ is not empty when the base $T$ is an arbitrary K-automorphism we
use a result of Parry \cite[Theorem 6]{P} who showed that if
a circle extension of a K-automorphism is weakly mixing then it is K. 
By  a  theorem of Jones and Parry \cite[Theorem 8]{J-P},
a generic circle extension of a weakly mixing system is weakly mixing and we conclude that 
the generic circle extension of the K-automorphism $T$ is K.

Finally, applying again the relative Halmos theorem \cite[Proposition 2.3]{G-W}, we deduce that 
$\Scal_1$ is a dense $G_\del$ subset of $\Scal$, as claimed. 
\end{proof}

\br

In the context of factors of Bernoulli shifts the following question is open.
Can there be a full entropy factor $(Y,S)$ of a Bernoulli shift $(X,T)$ such that the relative product
$X \underset{Y}{\times} X$ is ergodic but not a K-automorphism --- i.e. 
can this relative product have a nontrivial zero entropy factor?
We will next show that for any finite entropy K-automorphism $(X,T)$ the generic extension
$(\hat{X}, \hat{T})$ is such that the relative product $\hat{X} \underset{X}{\times}\hat{X}$ is also K.
We use (a bit modified) notation as in Section \ref{add-0}.

\begin{thm}\label{RI-K}
Let $\Xb = (X,\Xcal,\mu,T)$ be a finite entropy K-automorphism, and $S$ a Rokhlin cocycle
with values in MPT$(I, \la)$, where $I =[0,1]$ and $\la$ is Lebesgue measure on $I$.
We denote by $\hat{S}$ the transformation on the relative independent product $X \times I \times I$ 
defined by 
$$
\hat{S}(x, u, v) = (Tx, S_xu, S_xv), \quad (x,u,v) \in X \times I \times I.
$$
Then for a generic $S \in \Scal$ the transformation $\hat{S}$ is a K-automorphism. 
\end{thm}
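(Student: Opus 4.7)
The argument runs in close parallel to that of Theorem~\ref{Kthm}, only with $\hat S$ acting on $X\times I\times I$ in place of $X\times I$. Normalise $h(T)=1$, fix a finite $T$-generator $\Rcal\subset\Xcal$ of entropy $1$, and let $\Qcal_n$ denote the dyadic partition of $I$ of depth $n$. The refining sequence $\Pcal_n=\Rcal\times\Qcal_n\times\Qcal_n$ generates the Borel $\sigma$-algebra of $X\times I\times I$, so, since the inverse limit of K-processes is K, it suffices to establish the K property for each process $(\hat S,\Pcal_n)$ separately. The whole analysis is then carried out inside the dense $G_\del$ set $\Scal_0\subset\Scal$ supplied by Theorem~\ref{+0}, on which $\tilde S(x,u)=(Tx,S_xu)$ has the same entropy as $T$; the entropy formula for relatively independent joinings over the base then gives $h(\hat S)=2h(\tilde S)-h(T)=1$, and because $\Pcal_n$ refines the pullback of $\Rcal$ one in fact has $h(\hat S,\Pcal_n)=1$ for every $n$ and every $S\in\Scal_0$.

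This constancy of the per-process entropy is exactly what enables the finite-window entropy-bound / $\ep$-independence device used in Theorem~\ref{Kthm} to be re-used verbatim. For $n,N,k_0,k_1\in\N$ and $\ep,\del>0$, I would define the open set $U(n,N,k_0,k_1,\ep,\del)\subset\Scal_0$ by the two conditions
$$H\Bigl(\bigvee_{i=-k_0}^{N-1}\hat S^{-i}\Pcal_n\,\Big|\,\bigvee_{i=-k_1}^{-1}\hat S^{-i}\Pcal_n\Bigr)<N+k_0+\del$$
and
$$H\Bigl(\bigvee_{i=0}^{N-1}\hat S^{-i}\Pcal_n\,\Big|\,\bigvee_{i=-k_1}^{-k_0}\hat S^{-i}\Pcal_n\Bigr)>H\Bigl(\bigvee_{i=0}^{N-1}\hat S^{-i}\Pcal_n\Bigr)-\ep,$$
both of which are open in $S$ by continuity of conditional entropy for fixed finite partitions. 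The bookkeeping of Theorem~\ref{Kthm} then identifies
$$\Scal_2=\bigcap_{n,l,m,N}\bigcup_{k_0,k_1}U(n,N,k_0,k_1,1/l,1/m)$$
as the $G_\del$ subset of $\Scal_0$ on which $\hat S$ is a K-automorphism.

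The hard part will be producing a single witness $S_*\in\Scal_2$: a cocycle $S_*\in\Scal_0$ (so $\tilde S_*$ is a zero-relative-entropy extension of $T$) for which $\hat S_*$ on $X\times I\times I$ is K. The witness used in Theorem~\ref{Kthm}, a weakly mixing circle extension (K by Parry \cite{P}), fails outright here: for any compact group extension $\tilde S_*(x,u)=(Tx,\sigma_xu)$ the fibre difference $uv^{-1}$ is $\hat S_*$-invariant, so $\hat S_*$ is not even ergodic. What is needed instead is a cocycle $S_*\in\Scal_0$ whose extension $\tilde S_*$ is relatively weakly mixing over $T$ and for which the diagonal cocycle $(S_{*,x},S_{*,x})$ on $I\times I$ admits only the constants as measurable invariants, i.e.\ the only measurable $f(u,v)$ with $f(S_{*,x}u,S_{*,x}v)=f(u,v)$ for $\mu$-a.e.\ $x$ is $f\equiv\mathrm{const}$. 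For any such $S_*$, every intermediate factor of the extension $\hat S_*\to T$ has absolute entropy $h(T)>0$ (because the relative entropy vanishes), while the absence of non-constant purely fibre invariants rules out factors of $\hat S_*$ that do not factor through $T$; hence every non-trivial factor of $\hat S_*$ has positive entropy, and $\hat S_*$ is K. Once $\Scal_2\neq\emptyset$, the conjugation-invariance of the K property together with the relative Halmos lemma \cite[Proposition~2.3]{G-W} propagates K to a dense subset of $\Scal_0$, showing that $\Scal_2$ is a dense $G_\del$ in $\Scal$.
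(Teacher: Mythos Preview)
Your $G_\delta$ argument parallels the paper's and is correct. The real gap is in producing a witness: you never exhibit a concrete $S_*$, and the implication you rely on --- that relative weak mixing over $T$, zero relative entropy, and triviality of fibre invariants together force $\hat S_*$ to be K --- is false.

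Take $S_{*,x}\equiv R$ constant, with $R$ any weakly mixing zero-entropy transformation of $(I,\la)$. Then $\tilde S_*=T\times R$ has $h(\tilde S_*)=h(T)$, so $S_*\in\Scal_0$; the relative product over $T$ is $T\times R\times R$, ergodic since $R$ is weakly mixing, so $\tilde S_*$ is relatively weakly mixing over $T$; and your fibre-invariant condition $f(Ru,Rv)=f(u,v)$ is just $(R\times R)$-invariance, which forces $f$ constant. All three of your hypotheses hold, yet $\hat S_*=T\times R\times R$ has the non-trivial zero-entropy factor $R\times R$ and is therefore not K. The flaw is the dichotomy ``either a factor of $\hat S_*$ contains $T$, or it arises from fibre invariants'': your condition yields only ergodicity of $\hat S_*$, which controls invariant \emph{functions} on the fibre but says nothing about non-trivial invariant \emph{sub-$\sigma$-algebras} there. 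Indeed, were your argument correct it would resolve the open question the paper mentions just before the theorem.

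The paper supplies the witness via a random-walk cocycle. Fix a mixing zero-entropy system $(Z,R)$ and an independent partition $\{C_0,C_1,C_{-1}\}$ of $X$ with $\mu(C_1)=\mu(C_{-1})<\tfrac14$; set $f(x)=i$ on $C_i$ and take $S_x=R^{f(x)}$. A Meilijson-type argument \cite{Meil} shows that $\hat S(x,z_1,z_2)=(Tx,R^{f(x)}z_1,R^{f(x)}z_2)$ is ergodic, and then Rudolph \cite[Corollary~8]{Ru-86} gives K directly: if $T$ is K, the fibre map $R\times R$ is mixing, and the skew product is ergodic, then the skew product is K. The paper also sketches an alternative via Austin's weak-Pinsker splitting $T=Y\times B$ with $B$ Bernoulli, building the Meilijson extension over $B$ and observing that the relative independent product decomposes as $Y\times\bigl((B\times Z)\underset{B}{\times}(B\times Z)\bigr)$. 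Either way, a genuine structural input beyond ergodicity is required.
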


\begin{proof}
As usual the proof divides into two parts, showing that our property is a $G_\del$ subset of $\Scal$
and then showing that it is nonempty. 

Now $\Qcal_n$ will denote the product dyadic partition of $I \times I$ into squers of size 
$\frac{1}{2^n} \times \frac{1}{2^n}$ and if $\Rcal$ is a generating partition of $(X,T)$, 
we denote by $\Pcal_n = \Rcal \times \Qcal_n$.
It follows from Section \ref{add-0} that for a dense $G_\del$ susbet $\Scal_0 \subset \Scal$,
the corresponding $\hat{S}$ for $S \in \Scal_0$, has the same entropy as $T$. 
With this change of notation the proof of Theorem  \ref{Kthm} shows that the set of $S \in \Scal_0$
such that $\hat{S}$ is a K-automorphism is a $G_\del$ set.
It remains to show that it is not empty. 

Fix a mixing zero entropy system $\Zb = (Z, \Zcal, \nu, R)$ and an independent partition
$\{C_0, C_1, C_{-1}\}$ of $X$ such that $\mu(C_1) = \mu(C_{-1}) < 1/4$,
define $f(x)$ by setting $f(x) = i$ for $x \in C_i, \ i =0, 1, -1$, so that $\int f \, d\mu =0$, and
define $T_f : X \times Z \times Z \to X \times Z \times Z$ by
$$
T_f (x,z_1, z_2) = (Tx, R^{f(x)} z_1, R^{f(x)} z_2).
$$
An easy version of the proof by Meilijson \cite{Meil}, shows that $T_f$ is ergodic.
Now, according to Rudolph \cite[Corollary 8]{Ru-86},
when $T$ is a K-automorphism and  $R$ (hence also $R \times R$) is mixing, 
then if $T_f$ is ergodic, it must be K.

Another way to see this is as follows. Use Austin's result \cite{Au} to write
the K-system $\Xb$ as a product $X = \Yb \times \Bb$ with $\Yb$ a K-system
and $\Bb$ Bernoulli. Let $\Zb$ be a mixing system and apply Meilijson's 
construction \cite{Meil}, to obtain a skew product extension on $B \times Z$  which is K.
Now the system $\hat{\Xb}$ on $\hat{X} = X \times Z = Y \times B \times Z$ is K and 
the independent relative product
$$
\hat{X} \underset{X}{\times} \hat{X}    =
(Y \times B \times Z) \underset{Y \times B}{\times}(Y \times B \times Z)
\cong Y \times ((B \times Z) \underset{B}{\times}(B \times Z)),
$$ 
is also a K system.

Again an application of the relative Halmos theorem \cite[Proposition 2.3]{G-W}, finishes the proof.
\end{proof}

\br

 Let us recall the definition of relative mixing. 
  
\begin{defn}
A factor map $\pi  :\Xb  \to \Yb$ is called {\em relatively mixing} if for $ f, g \in L_{\infty}(X)$
\begin{equation}\label{rm}
\lim_{n \to \infty} \|\E(T^n f \cdot  g \mid Y)-\E(T^n f\mid Y)\E(g \mid Y) \|_2 =0.
\end{equation}
 \end{defn}
 
When $g$ is $\Ycal$-measurable then 
 $$ 
\E(T^n f \cdot  g \mid Y)-\E(T^n f\mid Y)\E(g \mid Y) = g \E(T^n f \mid Y) - g \E(T^n f \mid Y)  = 0,
$$ 
so we can replace in equation (\ref{rm}),  $g$ by $g - \E(g |Y)$, whose conditional expectation is zero.
Thus an equivalent condition is: for $ f, g \in L_{\infty}(X)$ with  $\E(g \mid Y) =0$,
\begin{equation}\label{rm1}
\lim_{n \to \infty} \|\E(T^n f \cdot  g \mid Y)\|_2 =0.
\end{equation}

 It is well known that the set of mixing transformations is meager (see \cite{H}).  
 In the paper \cite{S} by Mike Schnurr, the following relative version is proved (theorem 6):   
 the set of transformations $T$ acting on the product $X_1\times   X_2$  and leaving the $\sigma$-algebra $X_2$ invariant, in such a way that $T$ is relatively  mixing with respect to $X_2$, is meager.

However, as a consequence of Theorem \ref{RI-K},
we will show that, when $T$ acting on $X$ is K, then the generic extension of $T$ will be relatively mixing
over $X$.

Let us mention that this result sheds some light on
the following old question, originally due to D. Ornstein: 
given a Bernoulli shift $T$, does there exist a factor of $T$ relative to which 
$T$ is weakly mixing but not strongly mixing? 
Now, in view of the following theorem, there is no hope of using Baire category arguments
to resolve this question,
as was done historically in the ``absolute" case.

\begin{thm}\label{K-m}
Let $\Xb =(X, \Xcal,\mu,T)$ be a K-automorphism, then the generic extension of $\Xb$ 
is relatively mixing over $\Xb$.
\end{thm}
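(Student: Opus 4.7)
The plan is to derive Theorem \ref{K-m} as an essentially immediate consequence of Theorem \ref{RI-K}, via the standard equivalence between mixing of the relative self-product and relative mixing of the corresponding extension. The main work has already been done in Theorem \ref{RI-K}; the present theorem is really its corollary.

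First, I will invoke Theorem \ref{RI-K} to obtain a dense $G_\del$ subset $\Scal_1 \subset \Scal$ such that for every $S \in \Scal_1$ the skew product $\hat{S}(x,u,v) = (Tx, S_x u, S_x v)$ on $(X \times I \times I, \mu \times \la \times \la)$ is a K-automorphism. Under the identification $(x,u,v) \leftrightarrow ((x,u),(x,v))$, this system is precisely the relative independent product $\hat{\Xb} \underset{\Xb}{\times} \hat{\Xb}$, where $\hat{\Xb} = (X \times I, \mu \times \la, \hat{T})$ with $\hat{T}(x,u) = (Tx, S_x u)$. Since K implies mixing, for every $S \in \Scal_1$ the relative product $(\hat{\Xb} \underset{\Xb}{\times} \hat{\Xb}, \la_{\rm rel}, \hat{T} \times \hat{T})$ is mixing, where $\la_{\rm rel} = \mu \times \la \times \la$ is the relative independent joining of $\hat{\Xb}$ with itself over $\Xb$.

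Next, to conclude relative mixing for such $S$, fix $f, g \in L^\infty(\hat{X})$ with $\E(g \mid X) = 0$; by replacing $g$ with $g - \E(g \mid X)$ this reduction suffices to verify (\ref{rm1}). Using the standard identity $\int_X \E(h_1 \mid X)\, \E(h_2 \mid X)\, d\mu = \int h_1(y_1)\, h_2(y_2)\, d\la_{\rm rel}(y_1, y_2)$, I compute
\begin{align*}
\| \E(\hat{T}^n f \cdot g \mid X) \|_2^2
&= \int (\hat{T}^n f \cdot g)(y_1) \cdot (\hat{T}^n f \cdot g)(y_2)\, d\la_{\rm rel}(y_1, y_2) \\
&= \int (\hat{T} \times \hat{T})^n(f \otimes f)(y_1, y_2) \cdot (g \otimes g)(y_1, y_2)\, d\la_{\rm rel}(y_1, y_2),
\end{align*}
where $(f \otimes f)(y_1, y_2) = f(y_1) f(y_2)$. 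By mixing of $(\hat{T} \times \hat{T}, \la_{\rm rel})$ this tends to $\int (f \otimes f)\, d\la_{\rm rel} \cdot \int (g \otimes g)\, d\la_{\rm rel} = \|\E(f \mid X)\|_2^2 \cdot \|\E(g \mid X)\|_2^2 = 0$, verifying (\ref{rm1}) for $\hat{\Xb} \to \Xb$.

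Thus $\Scal_1$ is a dense $G_\del$ subset of $\Scal$ consisting of cocycles whose associated extensions are relatively mixing over $\Xb$, which is exactly the assertion of the theorem. The only genuine content beyond Theorem \ref{RI-K} is the passage from mixing of the relative product to relative mixing of the extension, and this is a routine $L^2$ calculation; I do not foresee any real obstacle, since the hard part — that the generic relative self-product inherits the K-property of the base — has already been absorbed in Theorem \ref{RI-K}.
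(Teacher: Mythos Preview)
Your proof is correct and follows essentially the same route as the paper: you invoke Theorem~\ref{RI-K} to get that the generic relative self-product is K, hence mixing, and then carry out exactly the $(2)\Rightarrow(1)$ direction of the paper's lemma (mixing of the relative product implies relative mixing of the extension) via the same $L^2$ identity $\|\E(\hat T^n f\cdot g\mid X)\|_2^2=\int (\hat T\times\hat T)^n(f\otimes f)\cdot(g\otimes g)\,d\la_{\rm rel}$. The only cosmetic difference is that the paper packages this implication as a two-sided lemma and proves both directions, whereas you prove only the direction actually needed.
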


\begin{proof}
This result is a consequence of Theorem \ref{RI-K}. and the following lemma:

\begin{lem}
Let $\Xb$ be ergodic and $\Yb$ be a factor of $\Xb$ with factor map $\pi : \Xb \to \Yb$. 
Then the following are equivalent:
\begin{enumerate}
\item
$\Xb$ is a relatively mixing extension of $\Yb$.
\item
In the relatively independent product $X\underset {Y}{\times} X$, the Koopman operator restricted to $L_2(Y)^{\perp}$ is mixing.
\end{enumerate}
\end{lem}

\begin{proof}
Let $X = Y \times Z$ be the Rohlin representation of $\Xb$ over $\Yb$ and
let $\mu = \int_Y \mu_y\,d\nu(y)$ be the disintegration of $\mu$ over $\nu$.
Let 
$$
W := X \underset{Y}{\times} X = \{(x_1, x_2) \in X \times X \colon \pi(x_1) = \pi(x_2)\},
$$
and let the relative product measure $\la$, supported on $W$, be given by
$$
\la = \int_Y \mu_y \times \mu_y \, d\nu(y).
$$
Thus for $F \in L_2(\la)$
$$
\int F(x_1,x_2) \, d\la(x_1,x_2) = \int_Y \left( \int_{X \times X} F(x_1,x_2) \, 
d\mu_y(x_1)\, d\mu_y(x_2)\right) \, d\nu(y).
$$

Note that a function $F(x_1, x_2) \in L_2(\la)$ is in ${L_2(Y)}^\perp$ \  iff \ 
$$
\E(F  \mid  Y)(y) = \int_X F(x_1,x_2) \, d\mu_y(x_1) d\mu_y(x_2),
$$
is $\nu$  a.e.  $0$.

\br

(1)$ \Rightarrow $ (2).\   
Given $f_1, f_2, g_1, g_2 \in L_\infty(X)$, let
$$
f_1 \otimes f_2 (x_1, x_2) = f_1(x_1)f_2(x_2), \quad g_1 \otimes g_2 (x_1, x_2) = g_1(x_1)g_2(x_2),
$$
and note that, e.g.,  $\E( f_1 \otimes f_2 \mid Y) = \E(f_1 \mid Y) \E(f_2 \mid Y)$.
Let
\begin{gather*}
F = f_1 \otimes f_2 - \E(f_1 \mid Y) \E(f_2 \mid Y)\\
G = g_1 \otimes g_2 - \E(g_1 \mid Y) \E(g_2 \mid Y).
\end{gather*}
Since linear combinations of functions of the form $f_1 \otimes f_2$ are dense in $L_2(\la)$,
mixing in $L_2(Y)^\perp$ will follow from 
\begin{equation}\label{m-perp}
\left| \int_W T^n F \cdot G \, d\la \right| \to 0
\end{equation}
(where we write $T^nF$ for the diagonal action of $T$ on $X \times X$).
Expanding the left hand side of the last formula we get
$$
\int_Y \E\large([T^n(f_1\otimes f_2) - T^n(\E(f_1 \mid Y) \E(f_2 \mid Y))]
[g_1\otimes g_2 - \E(g_1 \mid Y) \E(g_2 \mid Y)] \mid Y \large)\, d\nu.
$$
There are four terms in the product $T^n F \cdot G$.
The first one is :
\begin{gather*}
\int_W T^n(f_1\otimes f_2)\cdot g_1\otimes g_2\, d\la =\\
\int_Y \left( \int_X T^n f_1(x_1) g_1(x_1) \, d\mu_y(x_1)\right)
 \left( \int_X T^n f_2(x_2) g_2(x_2) \, d\mu_y(x_1)\right)\, d\nu(y).
\end{gather*}
By (1), i.e. by relative mixing (\ref{rm}) (and boundedness of the functions), for large $n$,
we can  replace, with only a small error, the expression
$\int_X T^n f_1(x_1) g_1(x_1) \, d\mu_y(x_1)$ by $\E(T^n f_1 \mid Y)\E(g_1 \mid Y)(y)$.

Similarly the other three terms can be replaced by the same expression, one with plus sign, two
with minus sign, and thus the total is indeed close to zero, as claimed.

\br

  (2)  $\Rightarrow$  (1) 
 Assuming $\E(g \mid Y)=0$, we need to show that equation (\ref{rm1}) holds.
 Now under this assumption, with $f = f_1 = f_2$ and $g = g_1 = g_2$, 
 real valued functions,
 equation (\ref{m-perp}) reads  
\begin{gather*}
 \int_W T^nf(x_1) T^n  f (x_2) \cdot g(x_1)  g(x_2)\, d\la = 
 \int_Y \E(T^n f \cdot g \mid Y)\cdot  \E f(T^n  f \cdot g  \mid Y) \, d\nu(y) \\
 = \parallel  \E(T^nf  \cdot g \mid Y) \parallel_2^2   \rightarrow 0.
\end{gather*}
\end{proof}

Applying the lemma to the relative independent product in Theorem \ref{RI-K}, and recalling the fact that
every K-automorphism is mixing, these arguments complete the proof of Theorem \ref{K-m}.
\end{proof}

\br

\section{A generic extension of a LB system is LB}\label{LB}

\begin{thm}
For any fixed loosely Bernoulli transformation $T$, the generic extension is loosely Bernoulli.
\end{thm}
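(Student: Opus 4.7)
The plan is to mirror Sections~\ref{bernoulli} and~\ref{K-Sec}, now using Feldman's very weak $\bar f$ (VWF) characterization of loose Bernoullicity in place of VWB. Recall that a finite partition $\Pcal$ is VWF for $T$ iff for every $\ep>0$ there exist $N$ and $k_0$ such that for all $k\ge k_0$ there is a collection $\Gcal_k$ of atoms of $\bigvee_{i=-k}^{-1}T^{-i}\Pcal$ with $\mu(\bigcup\Gcal_k)>1-\ep$ satisfying
$$
\bar f_N\Bigl(\bigvee_{i=0}^{N-1}T^{-i}\Pcal,\;\bigvee_{i=0}^{N-1}T^{-i}\Pcal\rest A\Bigr) < \ep\qquad (A\in\Gcal_k),
$$
where $\bar f$ is Feldman's edit metric; by Feldman's theorem, $T$ is LB iff VWF holds for each partition in a refining sequence generating $\Xcal$. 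Keeping the Rokhlin-cocycle parameterization $\Scal$ and generating sequence $\Pcal_n=\Rcal\times\Qcal_n$ of Section~\ref{add-0}, it therefore suffices to realize the set $\{S\in\Scal:\hat S\text{ is LB}\}$ as a dense $G_\delta$ in $\Scal$.

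I would define, for each $n,N_1,N_2\in\N$ and $\ep,\del>0$, the set $U(n,N_1,N_2,\ep,\del)\subset\Scal_0$ of cocycles $S$ for which both
\begin{enumerate}
\item[(i)] $H\bigl(\bigvee_0^{N_1-1}\hat S^{-i}\Pcal_n \bigm| \bigvee_{-N_2}^{-1}\hat S^{-i}\Pcal_n\bigr) < N_1\,h(\Pcal_n)+\del$, and
\item[(ii)] a subcollection of atoms of $\bigvee_{-N_2}^{-1}\hat S^{-i}\Pcal_n$ of total measure exceeding $1-\ep$ satisfies the $\bar f_{N_1}$-inequality above at level $\ep$
\end{enumerate}
hold. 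Both conditions are continuous in the finite cylinder probabilities of $\hat S$ (entropies of finite partitions are continuous, and $\bar f_{N_1}$ is continuous on the simplex of distributions on $\{1,\dots,|\Pcal_n|\}^{N_1}$), so $U(n,N_1,N_2,\ep,\del)$ is open. The candidate generic set is
$$
\Scal_1=\bigcap_{n,\ell,m}\bigcup_{N_1,N_2}U(n,N_1,N_2,1/\ell,1/m).
$$

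The principal technical obstacle, as in the Bernoulli and K proofs, is encoding the universal quantifier ``for every $k\ge k_0$'' in VWF by a single open condition indexed by one value $k=N_2$. Condition~(i) is included precisely for this purpose: combined with entropy preservation on $\Scal_0$ (Theorem~\ref{+0}) and the identity $H(\bigvee_0^{N_1-1}\hat S^{-i}\Pcal_n\mid\bigvee_{-\infty}^{-1}\hat S^{-i}\Pcal_n)=N_1h(\Pcal_n)$, condition~(i) forces the process to have negligible entropy drop beyond time $-N_2$. As in the Bernoulli argument of Section~\ref{bernoulli}, this entropy closeness translates into conditional $\ep$-independence of $\bigvee_0^{N_1-1}\hat S^{-i}\Pcal_n$ from the remote past $\bigvee_{-k}^{-N_2-1}\hat S^{-i}\Pcal_n$ given $\bigvee_{-N_2}^{-1}\hat S^{-i}\Pcal_n$, for every $k>N_2$. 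Since $\bar f$ is dominated by the total-variation distance of finite cylinder measures, this $\ep$-independence propagates the $\bar f_{N_1}$-bound of~(ii) from the atoms of $\bigvee_{-N_2}^{-1}\hat S^{-i}\Pcal_n$ to those of any refinement $\bigvee_{-k}^{-1}\hat S^{-i}\Pcal_n$ with $k>N_2$, up to a small loss in $\ep$. Consequently $\Scal_1$ is exactly the set of $S\in\Scal_0$ for which every $(\hat S,\Pcal_n)$ is VWF, i.e., for which $\hat S$ is LB.

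To show $\Scal_1\ne\emptyset$, take $S_x\equiv R_\alpha$ to be the constant cocycle given by rotation by an irrational $\alpha$ on $I\cong\T$, chosen so that $e^{2\pi i\alpha}$ lies outside the point spectrum of $T$; then $\hat S=T\times R_\alpha$ is ergodic. Being an ergodic isometric compact-abelian-group extension of the loosely Bernoulli $T$, this $\hat S$ is itself LB by the theorem of Ornstein--Rudolph--Weiss that isometric extensions preserve loose Bernoullicity, so $S\in\Scal_1$. A final application of the relative Halmos conjugacy lemma~\cite[Proposition~2.3]{G-W} upgrades the nonempty, conjugation-invariant $G_\delta$ set $\Scal_1$ to a dense $G_\delta$ subset of $\Scal$, completing the proof.
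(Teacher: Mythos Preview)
Your argument is correct and, in the positive-entropy case, coincides verbatim with the paper's: the authors explicitly say ``repeat exactly the same argument as in Section~\ref{bernoulli} with $\bar f_N$ replacing $\bar d_N$,'' which is precisely your scheme with the entropy condition~(i) eliminating the universal quantifier over $k$ and condition~(ii) giving the $\bar f$ estimate at a single time $N_2$. Your nonemptiness witness (an ergodic circle extension, hence an isometric extension of the LB base) is also the one the paper uses, citing \cite[Theorem~7.3]{ORW} in zero entropy and \cite[Corollary~8]{Ru-79} in positive entropy.

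The one genuine difference is in zero entropy. You run the same entropy-bootstrapping trick there (it still works, since on $\Scal_0$ one has $h(\Pcal_n,\hat S)=h(T)=0$, so condition~(i) reads $H(\vee_0^{N_1-1}\mid\vee_{-N_2}^{-1})<\del$ and the conditional-independence propagation goes through verbatim). The paper instead exploits a simpler characterization available only at zero entropy: a process is VLB iff for every $\ep$ there is an $N$ and a set $G_N$ of $\Pcal$-names of length $N$ with $\mu(\bigcup_{\al\in G_N}[\al])>1-\ep$ and $\bar f_N(\al,\al')<\ep$ for all $\al,\al'\in G_N$. There is no conditioning on the past here, hence no ``for all $k$'' to eliminate, and the open sets $U(n,N,\ep)$ are immediate. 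Your route buys uniformity (one argument for both entropy regimes); the paper's buys a cleaner zero-entropy step with no entropy bookkeeping.

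One notational point worth tightening: your $h(\Pcal_n)$ in condition~(i) should be read as the constant $h(T)$ (which equals $h(\Pcal_n,\hat S)$ for every $S\in\Scal_0$, since $\Pcal_n$ refines the generator $\Rcal$ of the base and $\hat S$ adds no entropy). Written that way the inequality is manifestly an open condition; written as the $S$-dependent process entropy it would only be upper semicontinuous.
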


\begin{proof}
We recall first one of the definitions of Loosely Bernoulli (LB) transformations.
There are two kinds, one with zero entropy and the other having positive entropy.

For the latter; $(X, \Xcal, \mu, T)$ is called {\em loosely Bernoulli}
if there is some set $A \in \Xcal$ such that the induced transformation $T_A$ is isomorphic 
to a Bernoulli shift.
The zero entropy loosely Bernoulli are defined in a similar fashion except that now 
$T_A$ is required to be isomorphic to an irrational rotation. For the basic theory and facts that
we will use see \cite{ORW}.

There is a characterization of process that define LB transformations similar to the VWB condition,
but with the $\bar{d}_n$ metric replaced by the $\bar{f}_n$ metric which we proceed to define.

For two words $u, v \in \{1,2,\dots,a\}^n$ we define
$$
\bar{f}_n(u,v) = 1 - \frac{k}{n},
$$
where $k$ is the maximal integer for which
we can find subsequences $0 \leq i_1 < i_2 < \cdots i_k \leq n-1$ and
$0 \leq j_1 < j_2 < \cdots j_k \leq n-1$, with 
$$
u(i_r) = v(j_r), \quad 1 \leq r \leq k.
$$
This defines a metric on words and using it instead of the normalized Hamming metric we also
define the $\bar{f}_n$ metric on probability distributions on $\{1, 2, \dots,a\}^n$.

\begin{defn}
A finite measurable partition $\Pcal$ of $X$ is called {\em very loosely Bernoulli} (VLB for short) if
for every $\ep>0$, there is a positive integer $N$ such that for all $k \geq 1$, there is a
collection $\Gcal_k$ of atoms of the partition $\bigvee_{i = -k}^{-1} T^{-i}\Pcal$ such that
\begin{enumerate}
\item
$\mu(\bigcup \{A \in \Gcal_k\}) > 1 -\ep$.
\item
$\bar{f}_N\left(\vee_{i = 0}^{N -1} T^{-i}\Pcal, \vee_{i = 0}^{N -1} T^{-i}\Pcal \rest A \right) < \ep$,
for every atom $A$ of $\Gcal_k$.
\end{enumerate}
Here $\bar{f}_N$ is the normalized $\bar{f}_N$ distance of distributions, 
and $\bigvee_{i = 0}^{N -1} T^{-i}\Pcal \rest A$ is the 
conditional distribution of $\bigvee_{i = 0}^{N -1} T^{-i}\Pcal$
restricted to the atom $A$ of the partition $\bigvee_{i = -k}^{-1} T^{-i}\Pcal$.
\end{defn}

\br

For zero entropy LB there is a simpler formulation as follows.

\begin{defn}
Let $\Pcal = \{P_1, \dots, P_a\}$ be a measurable partition of $X$.
Given  $\al \in \{1, 2, \dots,a\}^n$, we set $[\al] = \cap_{i=0}^{N-1} T^{-i}P_{\al_i}$.
A zero entropy process, defined by $\Pcal = \{P_1, \dots, P_a\}$, is {\em very loosely Bernoulli}
if given $\ep >0$ there is an $N$ and a 
a subset $G_N \subset \{1, 2, \dots,a\}^N$
such that
\begin{enumerate}
\item
$\sum \{\mu([\al]) : \al \in G_N\} > 1 -\ep$.
\item
for all $\al, \al' \in G_N$,  $\bar{f}_N (\al, \al') < \ep$.
\end{enumerate}
\end{defn}

Once again it suffices to show that there is refining sequence of partitions
$$
\Pcal_1 \prec \cdots \prec \Pcal_n \prec \Pcal_{n+1} \prec \cdots
$$
such that the corresponding algebras $\hat{\Pcal}_n$ satisfy
$\bigvee_{n \in \N} \hat{\Pcal}_n = \Xcal$, and such that for each $n$, the process $(T, \Pcal_n)$ is VLB.

In zero entropy, to see that a generic extension of
a LB $T$ is LB, we keep notations from previuos sections and define, for each $n, N, \ep$
$$
U(n, N, \ep) =
\{S \in \Scal_0 :  {\text{the process}}\  (\hat{S}, \Pcal_n) \ {\text{satisfies (1) and (2)}}\}.
$$
These are open sets and the set
$$
\bigcap_{n,k} \bigcup_{N=1}^\infty U(n, N, 1/k)
$$
is a $G_\del$ set and consists of LB transformations.
To see that it is not empty we can use the fact that for any LB transformation $T$ 
with zero entropy,  a compact abelian group extension is also LB \cite[Theorem 7.3]{ORW}.

\br

For the positive entropy case we repeat exactly the same argument as in Section \ref{bernoulli}
with $\bar{f}_N$ replacing $\bar{d}_N$.
The fact that there are relative zero entropy extensions of any LB transformation 
of positive entropy again follows from the fact that, in positive entropy,
any ergodic isometric extension of a LB transformation is LB \cite[Corollary 8]{Ru-79}.
\end{proof}

\br

\end{document}